\makeatletter\@addtoreset{equation}{section}\makeatother
\newtheorem{theorem}{Theorem}
\newtheorem{lemma}{Lemma}[section]
\newtheorem{proposition}[lemma]{Proposition}
\newtheorem{definition}{Definition}
{\theorembodyfont{\rmfamily}\newtheorem{remark}{Remark}[section]}
\newenvironment{proof}{\textit{Proof. }}{\hfill$\Box$}
\newcommand{\ep}{\varepsilon}
\newcommand{\ds}{\displaystyle}
\newcommand{\beq}[1]{\begin{equation} \label{#1}\ds}
\newcommand{\eeq}{\end{equation}}
\newcommand{\bml}[1]{\beq{#1} \begin{array}{c}\ds}
\newcommand{\eml}{\end{array}\eeq}
\newcommand{\beqq}{\begin{equation*}\ds}
\newcommand{\eeqq}{\end{equation*}}
\newcommand{\bmll}{\beqq \begin{array}{c}\ds}
\newcommand{\emll}{\end{array}\eeqq}
\renewcommand{\div}{{\rm div}\,}
\newcommand{\abs}[1]{\ensuremath{\left| #1 \right|}}
\def \de{\partial}
\def \ep{\varepsilon}
\def \Ucal{\mathcal{U}}
\def \d{\mathrm{d}}
\newcommand{\R}{\mathbb{R}}
\def \sil{\rightarrow}
\newcommand{\id}{{\rm Id}}
\newcommand{\Sym}{\mathcal{S}}
\begin{document}

\author{Elisabetta Chiodaroli and Ond\v{r}ej Kreml\thanks{The work of O.K. is part of the SCIEX project 11.152.}}
\title{On the energy dissipation rate of solutions to the compressible isentropic Euler system}
\date{}

\maketitle

\centerline{EPFL Lausanne}

\centerline{Station 8, CH-1015 Lausanne, Switzerland}

\bigskip

\centerline{Institut f\"ur Mathematik, Universit\"at Z\"urich}

\centerline{Winterthurerstrasse 190, CH-8057  Z\"urich, Switzerland}

 
\begin{abstract}
In this paper we extend and complement the results in \cite{ChDLKr}
on the well-posedness issue for weak solutions of the compressible isentropic Euler system in $2$ space dimensions
with pressure law $p(\rho)=\rho^\gamma$, $\gamma \geq 1$. First we show that every Riemann problem whose one-dimensional self-similar solution consists of two shocks 
admits also infinitely many two-dimensional admissible bounded weak solutions (not containing vacuum) generated by the method of De Lellis and Sz\'ekelyhidi \cite{dls1}, \cite{dls2}.
Moreover we prove that for some of these Riemann problems and for $1\leq \gamma < 3$ such solutions have greater energy dissipation rate than the self-similar solution emanating from the same Riemann data. We therefore show
that the maximal dissipation criterion proposed by Dafermos in \cite{Da1} does not favour the classical self-similar solutions.
\end{abstract}

\section{Introduction}

We consider the Cauchy problem for the compressible isentropic Euler
system of gas dynamics in two space dimensions, namely
\begin{equation}\label{eq:Euler system}
\left\{\begin{array}{l}
\partial_t \rho + {\rm div}_x (\rho v) \;=\; 0\\
\partial_t (\rho v) + {\rm div}_x \left(\rho v\otimes v \right) + \nabla_x [ p(\rho)]\;=\; 0\\
\rho (\cdot,0)\;=\; \rho^0\\
v (\cdot, 0)\;=\; v^0 \, ,
\end{array}\right.
\end{equation}
where the unknowns are the density $\rho$ and the velocity $v$
and the $3$ scalar equations correspond to statements of balance for mass and linear momentum.
The pressure $p$ is a function of $\rho$ determined from the constitutive thermodynamic
relations of the gas under consideration and it is assumed to satisfy $p'>0$ (this
hypothesis guarantees also the hyperbolicity of the system on the regions where $\rho$ is
positive).
We will work with pressure laws
$p(\rho)=  \rho^\gamma$
with constant $\gamma\geq 1$. 

The initial value problem \eqref{eq:Euler system} does not have, in general, a global classical solution due to the appearance of singularities even starting 
from regular initial data.
On the other hand weak solutions are known to be non-unique.
In the literature several ways to restore uniqueness have been devised: a classical one, the so called ``entropy criterion'' (see \cite{lax})
consists in complementing the system \eqref{eq:Euler system} with an entropy inequality which should be satisfied (in the sense of distributions)
by admissible (or entropy) solutions. For the specific system \eqref{eq:Euler system} in two-space dimensions the only non-trivial entropy is the total energy
$\eta = \rho \varepsilon(\rho)+\rho \frac{\abs{v}^2}{2}$ where $\varepsilon: \R^+\rightarrow \R$ denotes the internal energy and 
is given through the law $p(r)=r^2 \varepsilon'(r)$.
Thus, \textit{admissible} (or \textit{entropy}) solutions of \eqref{eq:Euler system} are weak solutions of \eqref{eq:Euler system} satisfying in the sense of distributions
the following
entropy inequality
\begin{equation} \label{eq:energy inequality}
\de_t \left(\rho \varepsilon(\rho)+\rho
\frac{\abs{v}^2}{2}\right)+\div_x
\left[\left(\rho\varepsilon(\rho)+\rho
\frac{\abs{v}^2}{2}+p(\rho)\right) v \right]
\;\leq\; 0,
\end{equation}
which is rather a weak form of energy balance. 

Recently a lot of attention has been devoted to the effectiveness of the entropy inequality \eqref{eq:energy inequality} as a selection criterion
among bounded weak solutions in more than one space dimension. In particular in \cite{dls2} and \cite{ch} some wild initial data have been constructed for
which \eqref{eq:Euler system} admits infinitely many admissible solutions. Moreover in \cite{ChDLKr}, the authors showed that the entropy criterion
does not single out unique weak solutions even under very strong assumptions on the initial data ($(\rho^0, v^0) \in W^{1, \infty}(\R^2)$).
Such counterexamples to uniqueness of entropy solutions to \eqref{eq:Euler system} have been constructed 
building on a new method originally developed for constructing $L^\infty$ solutions to the incompressible Euler
system by De Lellis and Sz\'ekelyhidi in \cite{dls1}-\cite{dls2} and based
on convex integration techniques and Baire category arguments (see also \cite{dls3} for a more general survey). 
This method was further improved to generate continuous and H\"older continuous solutions of incompressible Euler, see De Lellis and Sz\'ekelyhidi \cite{dls4}-\cite{dls5},
Buckmaster, De Lellis and Sz\'ekelyhidi \cite{bdls} and Daneri \cite{dan}.  
It has also been applied to other systems of PDEs, we refer the reader to Cordoba, Faraco and Gancedo \cite{cfg}, Chiodaroli, Feireisl and Kreml \cite{ChFeKr},
Shvidkoy \cite{shvidkoy} and Sz\'ekelyhidi \cite{sz2}.

\subsection{Entropy rate admissibility criterion} \label{s:entropy}

The series of negative results concerning the entropy criterion for system \eqref{eq:Euler system} motivated 
us to explore other admissibility criteria which could work in favour of uniqueness.
In this paper, we therefore address an alternative criterion which has been proposed by Dafermos in \cite{Da1} under
the name \textit{entropy rate admissibility criterion}.
In order to formulate this criterion for the specific system \eqref{eq:Euler system} we define the \textit{total energy} 
of the solutions $(\rho,v)$ to \eqref{eq:Euler system} as
\begin{equation} \label{eq:energy0}
E[\rho,v](t) = \int_{\R^2} \left(\rho\ep(\rho) + \rho\frac{\abs{v}^2}{2}\right)\d x .
\end{equation}
Let us remark that in Dafermos' terminology $E[\rho,v](t)$ is called ``total entropy'' (see \cite{Da1}). However, since in the context of system \eqref{eq:Euler system}
the physical energy plays the role of the mathematical entropy, it is more convenient to call $E[\rho,v](t)$ total energy.
The right derivative of $E[\rho,v](t)$ defines the \textit{energy dissipation rate} of $(\rho,v)$ at time $t$:
\begin{equation} \label{eq:dissipation rate0}
D[\rho,v](t) = \frac{\d_+ E[\rho,v](t)}{\d t}. 
\end{equation}
We will later on work with solutions with piecewise constant values of $\rho$ and $\abs{v}^2$ and it is easy to see that the total energy of any solution 
we construct is infinite. Therefore we restrict the infinite domain $\R^2$ to a finite box $(-L,L)^2$ and denote
\begin{align}
&E_L[\rho,v](t) = \int_{(-L,L)^2} \left(\rho\ep(\rho) + \rho\frac{\abs{v}^2}{2}\right)\d x \label{eq:energy L}\\
&D_L[\rho,v](t) = \frac{\d_+ E_L[\rho,v](t)}{\d t}. \label{eq:dissipation rate L}
\end{align}
The problem of infinite energy of solutions may be solved also by restricting to a periodic domain
and constructing (locally in time) periodic solutions. We describe this procedure in Section \ref{s:per}.

According to \cite{Da1} we can now define the entropy rate admissibility criterion.
\begin{definition}[Entropy rate admissible solution]\label{d:entropy rate}
A weak solution $(\rho,v)$ of \eqref{eq:Euler system} is called \textit{entropy rate admissible} if 
there exists $L^* > 0$ such that there is no other weak solution $(\overline{\rho},\overline{v})$ with the property that for some $\tau\geq 0$, $(\overline{\rho},\overline{v})(x,t)= (\rho,v)(x,t)$ on $\R^2 \times [0, \tau]$
and $ D_L[\overline{\rho},\overline{v}](\tau) < D_L[\rho,v](\tau) $ for all $L \geq L^*$.
\end{definition}
In other words, we call entropy rate admissible the solution(s) dissipating most total energy.

Dafermos in \cite{Da1} investigates the equivalence of the entropy rate admissibility criterion to other admissibility criteria for hyperbolic conservation laws
in the one-dimensional case: he proves that for a single equation the entropy rate criterion is equivalent to the viscosity criterion in the class
of piecewise smooth solutions; 
moreover he justifies the ``new'' criterion also for the system of two equations which governs the rectilinear isentropic motion of elastic media.
However, Dafermos himself suggests in \cite{Da1} the equations of gas dynamics as another test candidate for the entropy rate criterion.
Further investigation has been carried out by Hsiao in \cite{Hs}. Following the approach of Dafermos, Hsiao proves, in the class
of piecewise smooth solutions, the equivalence of the entropy rate criterion and
the viscosity criterion for the one-dimensional system of equations of nonisentropic gas dynamics in lagrangian formulation with pressure laws $p(\rho)= \rho^\gamma$
for $\gamma\geq 5/3 $ while the same equivalence is disproved for $\gamma < 5/3 $.
For further analysis on the relation between entropy rate minimization and admissibility of solutions for a more general class of evolutionary equations we refer to \cite{Da2}.
However, to our knowledge, up to some time ago the entropy rate criterion had not been tested in the case of several space variables and on broader class of solutions
than the piecewise smooth ones.

Very recently Feireisl in \cite{fe} extended the result of Chiodaroli \cite{ch} in order to obtain
infinitely many admissible weak solutions of \eqref{eq:Euler system} globally in time; as a consequence of his construction he can also prove that
none of these solutions are entropy rate admissible. Even if the result of Feireisl \cite{fe} may suggest the effectiveness of the entropy rate criterion 
to rule out oscillatory solutions constructed by the method of De Lellis and Sz\'ekelyhidi, in this paper we actually show that for specific initial data 
the oscillatory solutions dissipate more energy than the self-similar solution which may be believed to be the physical one.

Our results are also inspired by the work \cite{sz} where Sz\'ekelyhidi
constructed irregular solutions of the incompressible Euler equations with vortex-sheet initial data and computed their dissipation rate.

We focus on the Riemann problem for the system \eqref{eq:Euler system}--\eqref{eq:energy inequality} in two-space dimensions. 
Hence, we denote the space variable as $x=(x_1, x_2)\in \R^2$ and consider initial data in the form
\begin{equation}\label{eq:R_data}
(\rho^0 (x), v^0 (x)) := \left\{
\begin{array}{ll}
(\rho_-, v_-) \quad & \mbox{if $x_2<0$}\\ \\
(\rho_+, v_+) & \mbox{if $x_2>0$,} 
\end{array}\right. 
\end{equation}
where $\rho_\pm, v_\pm$ are constants. 
Our concern has been to compare the energy dissipation rate of standard self-similar solutions associated to the Riemann problem \eqref{eq:Euler system}--\eqref{eq:energy inequality},
\eqref{eq:R_data} with the energy dissipation rate of non-standard solutions for the same problem obtained by the method of De Lellis and Sz\'ekelyhidi. 

We obtained the following results.

\begin{theorem}\label{t:main0}
 Let $p(\rho) = \rho^{\gamma}$ with $\gamma \geq 1$. For every Riemann data \eqref{eq:R_data} such that the self-similar solution 
 to the Riemann problem \eqref{eq:Euler system}--\eqref{eq:energy inequality}, \eqref{eq:R_data} consists of an admissible $1-$shock and an admissible $3-$shock, i.e. $v_{-1} = v_{+1}$ and
 \begin{equation}\label{eq:2shocks condition}
  v_{+2} - v_{-2} < -\sqrt{\frac{(\rho_--\rho_+)(p(\rho_-)-p(\rho_+))}{\rho_-\rho_+}},
 \end{equation}
 there exist infinitely many admissible solutions to \eqref{eq:Euler system}--\eqref{eq:energy inequality}, \eqref{eq:R_data}.
\end{theorem}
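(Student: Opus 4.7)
The plan is to reduce the theorem to the construction of what the authors of \cite{ChDLKr} call a \emph{fan subsolution}: a piecewise constant triple $(\rho, \ve, \mathbb{U})$ that partitions the upper half plane of the $(x,t)$-plane into three wedges separated by two straight rays emanating from the origin, and then feed this subsolution into the De~Lellis--Sz\'ekelyhidi convex integration machinery to produce infinitely many admissible weak solutions. Outside a middle wedge the density and velocity agree with $(\rho_-, \ve_-)$ and $(\rho_+, \ve_+)$, while inside the middle wedge we prescribe a constant density $\rho_1$, a constant ``average velocity'' $\ve_1$ and a constant traceless symmetric $2\times 2$ matrix $\mathbb{U}_1$ in such a way that
\begin{equation*}
\rho_1\bigl(\ve_1\otimes\ve_1\bigr) - \tfrac{1}{2}\rho_1 c^2\,\I \;\prec\; \rho_1\,\mathbb{U}_1,
\end{equation*}
holds in the sense that the right hand side strictly dominates the left hand side as a symmetric matrix, for some constant $c>0$ playing the role of the pointwise kinetic energy density. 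The passage from such a subsolution to true solutions is provided by the abstract lemma of \cite{ChDLKr} (itself an adaptation of \cite{dls2}), which generates uncountably many bounded velocity fields $\ve$ on the middle wedge satisfying $\ve\otimes\ve - \tfrac{1}{2}|\ve|^2\I = \mathbb{U}_1$ and $|\ve|^2 = c$ pointwise, agreeing with the subsolution on the boundary and preserving the divergence/momentum constraints in the distributional sense.

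The first main step is therefore to translate the matching of the three wedges into the correct algebraic system. Imposing the Rankine--Hugoniot conditions for mass and momentum across the two rays of speeds $\nu_-$ and $\nu_+$, together with the replacement $\ve\otimes\ve\mapsto \tfrac{1}{2}c\,\I+\mathbb{U}_1$ in the middle wedge, yields six scalar equations for six unknowns $(\rho_1, \ve_1, \mathbb{U}_1, c, \nu_-, \nu_+)$; the assumption $v_{-1}=v_{+1}$ makes the system compatible with a middle state of the form $\ve_1=(v_{-1}, w)$ and $\mathbb{U}_1$ diagonal. Next one writes down the two admissibility inequalities at the rays $\nu_\pm$ (entropy inequality \eqref{eq:energy inequality} integrated across each discontinuity) and the strict matrix inequality in the middle wedge, and one must exhibit an open set of parameters in the remaining degrees of freedom on which all of these hold simultaneously.

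The core of the argument, and the step which I expect to be the hardest, is to produce such parameters under the sole hypothesis \eqref{eq:2shocks condition}. My approach is to parametrize the candidate middle states by $\rho_1$ lying strictly between $\min\{\rho_-,\rho_+\}$ and the intermediate density $\rho_m$ of the self-similar two-shock solution, solve the Rankine--Hugoniot system explicitly for $(w, \nu_\pm, c, \mathbb{U}_1)$ as smooth functions of $\rho_1$, and then verify that in the limit $\rho_1\to\rho_m$ the strict subsolution matrix inequality degenerates to an equality (because the middle wedge collapses to the constant state of the genuine two-shock solution), while the jump admissibility inequalities at the two rays become strict because both rays are admissible shocks by \eqref{eq:2shocks condition}. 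A continuity argument then opens up a full neighbourhood of $\rho_m$ in which every inequality is strict. Once such a fan subsolution exists, the convex integration lemma from \cite{ChDLKr} directly yields the infinitely many admissible solutions claimed in Theorem~\ref{t:main0}.

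The technical obstacle in the last paragraph is purely algebraic: it amounts to checking that the matrix $\rho_1\mathbb{U}_1-\rho_1\ve_1\otimes\ve_1+\tfrac{1}{2}\rho_1 c\,\I$ has both eigenvalues positive along the one-parameter family, uniformly away from the degeneration $\rho_1=\rho_m$. Because \eqref{eq:2shocks condition} is an \emph{open} condition, this reduces to verifying the sign of a determinant and of one trace on the curve of solutions of the Rankine--Hugoniot system, which can be carried out by a direct, if somewhat lengthy, computation in the spirit of the analogous check performed in \cite{ChDLKr} for the single 1-shock/1-rarefaction setting.
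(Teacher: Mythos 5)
Your proposal follows the same overall strategy as the paper: build a fan subsolution (Definitions \ref{d:fan}--\ref{d:admiss}), apply Proposition \ref{p:subs} (the convex-integration machinery from \cite{ChDLKr}), reduce everything to the algebraic system of Proposition \ref{p:algebra}, parametrize the middle-wedge density by $\rho_1$, and exploit the degeneration to the self-similar two-shock solution as $\rho_1\to\rho_m$. That plan is the paper's plan. There are, however, two genuine imprecisions in your sketch that would derail a careful write-up.

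First, the sentence ``a continuity argument then opens up a full neighbourhood of $\rho_m$ in which every inequality is strict'' does not work as stated. Continuity cannot upgrade an \emph{equality} at $\rho_1=\rho_m$ (the degenerate subsolution matrix inequality) into a \emph{strict} inequality for $\rho_1$ near $\rho_m$; moreover the inequality actually fails for $\rho_1>\rho_m$, so there is no ``full neighbourhood'' to hope for. The paper fills this hole with Lemma \ref{l:ep1 decreasing}: it introduces the auxiliary quantity $\ep_1=\tfrac{C}{2}-\gamma_1-\beta^2$, shows $\ep_1$ is determined by the Rankine--Hugoniot identities as an explicit function of $\rho_1$, and proves by a monotonicity/sign analysis (not continuity) that $\ep_1(\rho_1)>0$ precisely on the interval $\bigl(\max\{\rho_-,\rho_+\},\rho_m\bigr)$, with $\ep_1(\rho_m)=0$. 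Only the two admissibility inequalities are handled by continuity: after nontrivial rewriting they become upper bounds $\ep_2<\,(\text{right-hand sides})$ for a second, free parameter $\ep_2>0$, and one checks (using the strict shock admissibility from \eqref{eq:2shocks condition} together with Lemma \ref{l:admiss function}) that those right-hand sides are strictly positive at $\rho_1=\rho_m$, hence for $\rho_1$ in a left half-neighbourhood. Your final paragraph hints at this computation, but the middle paragraph misattributes its content to ``continuity.''

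Second, your parametrization range should be $\rho_1\in\bigl(\max\{\rho_-,\rho_+\},\rho_m\bigr)$, not $\bigl(\min\{\rho_-,\rho_+\},\rho_m\bigr)$: for $\rho_1$ strictly between $\min$ and $\max$ the expressions \eqref{eq:num}--\eqref{eq:nup} for the fan speeds acquire a negative radicand. (The two-shock hypothesis guarantees $\rho_m>\max\{\rho_-,\rho_+\}$, so a left neighbourhood of $\rho_m$ lies in the correct range, and this slip is harmless near $\rho_m$.) Relatedly, the Rankine--Hugoniot matching gives six scalar equations in \emph{eight} unknowns $(\rho_1,\ve_1,\mathbb{U}_1,c,\nu_-,\nu_+)$, leaving two free parameters, which the paper takes to be $\rho_1$ and $\ep_2$; your ``six for six'' miscount hides the crucial second degree of freedom that makes the admissibility inequalities satisfiable.
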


Compared to \cite{ChDLKr}, Theorem \ref{t:main0} widely extends the set of initial data for which there exist infinitely many admissible solutions to the Riemann problem. 
Moreover Theorem \ref{t:main0} gives this result for any pressure law $p(\rho) = \rho^\gamma$, 
instead of the specific case $\gamma = 2$ in \cite{ChDLKr}. 
As a consequence of Theorem \ref{t:main0} and by a suitable choice of initial data, we can prove the following main theorem.

\begin{theorem}\label{t:main}
 Let $p(\rho) = \rho^\gamma$, $1 \leq \gamma < 3$. 
 There exist Riemann data \eqref{eq:R_data} for which the self-similar solution to \eqref{eq:Euler system}--\eqref{eq:energy inequality}
 emanating from these data is not entropy rate admissible.
\end{theorem}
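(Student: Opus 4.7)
The strategy is to invoke Theorem \ref{t:main0} and then compare dissipation rates directly. Given Riemann data \eqref{eq:R_data} satisfying \eqref{eq:2shocks condition}, Theorem \ref{t:main0} supplies both the classical two-shock self-similar solution and a family of wild admissible weak solutions produced by the De Lellis--Sz\'ekelyhidi method; it therefore suffices to exhibit Riemann data and one such wild solution whose energy dissipation rate at $\tau = 0^+$ is strictly less (i.e., more energy is dissipated) than that of the self-similar one, and to verify that the inequality persists for all sufficiently large $L$.

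First I would specialize to the symmetric colliding family $\rho_- = \rho_+ =: \rho_0$, $v_{-1} = v_{+1} = 0$, $v_{-2} = -v_{+2} =: w > 0$, for which \eqref{eq:2shocks condition} reduces to $w > 0$. By the reflection symmetry $x_2 \mapsto -x_2$ one may take both solutions symmetric: the self-similar one has a single intermediate state $(\rho_{SS}, 0)$ separated from the outer states by two admissible shocks of speeds $\pm \sigma_{SS}$, with $\rho_{SS}, \sigma_{SS}$ determined algebraically from the Rankine--Hugoniot conditions. The wild solutions from Theorem \ref{t:main0} can be chosen symmetric as well, with an intermediate region of constant density $\rho_m$, zero mean velocity, a prescribed extra kinetic energy $\rho_m C / 2$ carried by the oscillations, and fan boundaries at speeds $\pm \nu$; the triple $(\rho_m, C, \nu)$ is constrained by Rankine--Hugoniot at the two fan boundaries together with the strict inequalities required by the convex integration construction of Theorem \ref{t:main0}.

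Since both solutions are piecewise constant in $\rho$ and in the spatial average of $\rho|v|^2$ on the strips $x_2/t \in (-\infty, -\nu),\,(-\nu, \nu),\,(\nu, +\infty)$ for $0 < t \ll 1$ (and independent of $x_1$), inserting them into \eqref{eq:energy L} and differentiating at $t = 0^+$ yields
\begin{equation*}
D_L(0^+) = 2L\,\Phi(\text{jump data})
\end{equation*}
exactly, for every $L$ larger than some $L^*$ depending on the fan speeds, so the comparison reduces to comparing the scalar coefficients $\Phi_{SS}$ and $\Phi_{wild}(\rho_m)$. I would write both $\Phi$'s explicitly, use the jump relations and the subsolution constraints to eliminate $C$ and $\nu$ in favour of $\rho_m$, and then show that $\Phi_{SS} - \Phi_{wild}(\rho_m) > 0$ for at least one $\rho_m$ in the admissible interval provided $1 \leq \gamma < 3$. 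A natural route is a Taylor expansion around $\rho_m = \rho_{SS}$ (where densities coincide but the kinetic-energy distribution differs), or a small-$w$ asymptotic analysis, whose leading coefficient carries the $\gamma$-dependent factor that becomes favourable exactly for $\gamma < 3$.

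The main obstacle is algebraic rather than conceptual. The wild parameters $(\rho_m, C, \nu)$ are tightly coupled by the two fan Rankine--Hugoniot relations and the strict subsolution inequalities inherited from the proof of Theorem \ref{t:main0}; obtaining a clean one-parameter parametrization together with an explicit admissible interval for $\rho_m$ is the central computation. Pinpointing the threshold $\gamma = 3$ is expected to be the delicate step: it arises from the interplay between the internal energy $\rho^\gamma/(\gamma-1)$ (or $\rho\log\rho$ when $\gamma=1$) and the kinetic contribution in the energy flux $(\eta + p)v_2$ across the fan boundaries, and isolating the $\gamma$-dependent coefficient that controls the sign of $\Phi_{SS} - \Phi_{wild}$ requires careful bookkeeping rather than any soft compactness or convexity argument.
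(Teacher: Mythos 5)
Your overall architecture is aligned with the paper's: invoke Theorem~\ref{t:main0} to produce both the self-similar two-shock solution and the wild solutions, observe that both are piecewise constant in $\rho$ and in $|v|^2$ on fan regions so that $D_L(0^+) = 2L\cdot\Phi$, recognize that at $\rho_1 = \rho_{\max}$ (with $\ep_2 = 0$) the subsolution degenerates to the self-similar solution, and then reduce the theorem to a sign computation on a derivative. That is exactly the skeleton of the paper's Section~\ref{s:Dis}: they write $f(\rho_1) = \nu_-(E_- - E_1) + \nu_+(E_1 - E_+)$, note $\lim_{\rho_1\to\rho_{\max}} f = D_L[\rho_c,v_c]/(2L)$ at $\ep_2=0$, and prove Lemma~\ref{l:f increasing} (that $f$ is increasing near $\rho_{\max}$) so some $\rho_1 < \rho_{\max}$ dissipates more.

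The genuine gap lies in how you propose to close the sign determination, which is the entire content of the theorem. You suggest either a Taylor expansion around $\rho_m = \rho_{SS}$ or ``a small-$w$ asymptotic analysis.'' The first is the right reduction (it is $f'(\rho_{\max}) > 0$ that must be shown), but the second is the \emph{wrong} regime: the paper's argument hinges on the \emph{strong-shock} limit $|u| \to \infty$, not $w \to 0$. Concretely, they introduce $Q(\rho) = 2\rho\ep(\rho) - p(\rho) = \frac{3-\gamma}{\gamma-1}\rho^\gamma$ (for $\gamma>1$), write $f(\rho_1)$ in terms of $Q$, reduce to a one-variable function $g(\rho_1)$, and show $g$ has a unique interior local minimum $\overline{\rho}$ with asymptotic size $\overline{\rho}\sim B^{1/(\gamma+1)}$ (Proposition~\ref{p:g}), while $\rho_{\max}\sim B^{1/\gamma}$ (Lemma~\ref{l:ep1 decreasing}). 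Since $1/\gamma > 1/(\gamma+1)$, for $|u|$ large one gets $\overline{\rho} < \rho_{\max}$ and hence $g' > 0$ (so $f' > 0$) at $\rho_{\max}$. In the opposite, weak-shock limit both $\rho_{\max}$ and $\overline{\rho}$ collapse to the same endpoint and the sign comparison degenerates; there is no reason to expect the leading small-$w$ coefficient to carry the $(3-\gamma)$ factor you anticipate. The restriction $\gamma<3$ enters structurally through $Q$ and $z''(\rho_1) = (3-\gamma)\gamma\rho_1^{\gamma-3}z_0(\rho_1)$, which controls the monotonicity giving the unique $\overline{\rho}$; your attribution of the threshold to ``internal versus kinetic energy in the flux'' is broadly in the right spirit but does not identify the mechanism. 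Finally, a minor point: you specialize to $\rho_-=\rho_+$, whereas the paper carries out the computation for $\rho_->\rho_+$ and only remarks that $R=0$ and $R<0$ could be done likewise; choosing $R=0$ is defensible but you would then have to redo the analogue of Proposition~\ref{p:g} from scratch rather than lean on a remark. As written, the proposal correctly frames the problem but leaves the decisive asymptotic step both unperformed and pointed at the wrong limit.
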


Theorem \ref{t:main} ensures that for $1 \leq \gamma < 3$ there exist initial Riemann data \eqref{eq:R_data} for which 
some of the infinitely many nonstandard solutions constructed as in Theorem \ref{t:main0} dissipate more energy than the self-similar solution, 
suggesting in particular that the Dafermos entropy rate admissibility criterion would not pick the self-similar solution as the admissible one.

The paper is organized as follows. In Section \ref{s:SS} we recall the standard theory for the Riemann problem for the compressible isentropic Euler system. 
In Section \ref{s:I} we provide all the necessary definitions and present for completeness the crucial ideas of the method of De Lellis and Sz\'ekelyhidi which
enables the construction of infinitely many bounded weak solutions. In Sections \ref{s:Ex} and \ref{s:Dis} we prove Theorems \ref{t:main0} and \ref{t:main}. 
Finally, Section \ref{s:per} contains concluding remarks.

\section{Self-similar solutions of the Riemann problem}\label{s:SS}

 In this Section we present the classical theory for the Riemann problem for system \eqref{eq:Euler system}--\eqref{eq:energy inequality} with initial data \eqref{eq:R_data}, for more details we refer the reader to the books \cite{br} or \cite{da}. More precisely, we search here for one-dimensional solutions, i.e. functions $\rho(x_2,t)$ and $m(x_2,t) = \rho(x_2,t)v(x_2,t)$ solving the two-dimensional compressible isentropic Euler system, which in this case reads as follows
\begin{equation}\label{eq:explicit system}
\left\{\begin{array}{l}
 \de_t \rho + \de_{x_2} m_2 = 0 \\
 \de_t m_1 + \de_{x_2} \frac{m_1m_2}{\rho} = 0 \\
 \de_t m_2 + \de_{x_2} \left(\frac{m_2^2}{\rho} + p(\rho)\right) = 0
\end{array}\right.
\end{equation}
with initial data
\begin{equation}\label{eq:Rdata m}
(\rho^0 (x), m^0 (x)) := \left\{
\begin{array}{ll}
(\rho_-, m_-) = (\rho_-,\rho_-v_-) \quad & \mbox{if $x_2<0$}\\ \\
(\rho_+, m_+) = (\rho_+,\rho_+v_+)  & \mbox{if $x_2>0$.} 
\end{array}\right. 
\end{equation}

We introduce the state vector $U:= (\rho, m_1, m_2)$ and observe that the system \eqref{eq:explicit system} falls into the class of hyperbolic conservation laws taking the form
$$\de_t U + \de_{x_2} F(U)=0,$$
where 
$$
F(U):=\left( \begin{array}{c}
    m_2 \\
    \frac{m_1 m_2}{\rho}\\
    \frac{{m_2}^2}{\rho}+p(\rho)\\
    \end{array} \right).
$$
The system \eqref{eq:explicit system} is indeed strictly hyperbolic on the part of the state space where $\rho > 0$ (see \cite{da}) since the Jacobian matrix $DF(U)$ has three real distinct eigenvalues 
\begin{equation} \label{eq:eigenvalues}
 \lambda_1= \frac{m_2}{\rho}-\sqrt{p'(\rho)}, \qquad \lambda_2=\frac{m_2}{\rho}, \qquad \lambda_3= \frac{m_2}{\rho}+\sqrt{p'(\rho)}
\end{equation}
and three linearly independent right eigenvectors 
\begin{equation} \label{eq:eigenvectors}
 R_1=\left( \begin{array}{c}
    1\\
    \frac{m_1}{\rho}\\
    \frac{m_2}{\rho}-\sqrt{p'(\rho)}\\
    \end{array} \right), \quad 
R_2=\left( \begin{array}{c}
    0\\
    1\\
    0\\
    \end{array} \right), \quad 
 R_3=\left( \begin{array}{c}
    1\\
    \frac{m_1}{\rho}\\
    \frac{m_2}{\rho}+\sqrt{p'(\rho)}\\
    \end{array} \right).
\end{equation}
The eigenvalues $\lambda_i$ are called the $i$-\textit{characteristic speeds} of the system \eqref{eq:explicit system}. 
Finally, the classical theory yields the existence of three Riemann invariants of the system \eqref{eq:explicit system} (for definitions see \cite{da}).  More precisely, the functions
\begin{equation}\label{eq:invariants}
w_3=\frac{m_2}{\rho} +\int_0^\rho \frac{\sqrt{p'(\tau)}}{\tau} d\tau, \qquad w_2=\frac{m_1}{\rho}, 
\qquad w_1=\frac{m_2}{\rho} -\int_0^\rho \frac{\sqrt{p'(\tau)}}{\tau} d\tau
\end{equation}
are, respectively, ($1$- and $2$-), ($1$- and $3$-), ($2$- and $3$-) Riemann invariants.

We close the introductory remarks by observing that the $2-$characteristic family of the system \eqref{eq:explicit system} is linearly degenerate, i.e. $D\lambda_2\cdot R_2 = 0$, whereas the $1-$ and $3-$characteristic families are genuinely nonlinear. Moreover, the state variable $m_1$ appears only in the second equation of \eqref{eq:explicit system} and thus the system can be decoupled. In particular one can show that if the initial data \eqref{eq:Rdata m} of the Riemann problem satisfy $v_{-1} = v_{+1}$, i.e. $\frac{m_{-1}}{\rho_-} = \frac{m_{+1}}{\rho_+}$, then the first component of the velocity of the self-similar solution has to be equal to this constant, $v_1 = \frac{m_1}{\rho} = v_{-1} = v_{-2}$, for details see \cite[Section 8]{ChDLKr}.

Since this will be the case of all Riemann initial data studied in this paper, we study here further only the reduced system containing only the second component of the momentum (for simplicity of notation we denote it from now on by $m$ instead of $m_2$)
\begin{equation}\label{eq:reduced system}
\left\{\begin{array}{l}
 \de_t \rho + \de_{x_2} m = 0 \\
 \de_t m + \de_{x_2} (\frac{m^2}{\rho} + p(\rho)) = 0
\end{array}\right.
\end{equation}
with initial data\footnote{Here again $m_\pm$ and $v_\pm$ are no longer vectors of two components but scalars.}
\begin{equation}\label{eq:Rdata m2}
(\rho^0 (x), m^0 (x)) := \left\{
\begin{array}{ll}
(\rho_-, m_-) = (\rho_-,\rho_-v_-) \quad & \mbox{if $x_2<0$}\\ \\
(\rho_+, m_+) = (\rho_+,\rho_+v_+)  & \mbox{if $x_2>0$.} 
\end{array}\right. 
\end{equation}

\subsection{Admissible shocks}

In this section we study admissible shocks related to the system \eqref{eq:reduced system}. The Rankine-Hugoniot shock conditions (cf. \cite{da} for relevant definitions)
are as follows. States $(\rho_-,m_-)$ on the left and $(\rho_+,m_+)$ on the right with $\rho_\pm > 0$ are connected with a shock of speed $s \in \mathbb{R}$, $s \neq 0$, if and only if
\begin{align}
 \label{eq:RH1} s(\rho_+ - \rho_-) &= m_+ - m_- \\ 
 \label{eq:RH2} s(m_+ - m-) &= \frac{m_+^2}{\rho_+} - \frac{m_-^2}{\rho_-} + p(\rho_+) - p(\rho_-).
\end{align}
From these equations we easily eliminate the shock speed and achieve
\begin{equation}\label{eq:shock speed}
 s = \pm\sqrt{\frac{\frac{m_+^2}{\rho_+} - \frac{m_-^2}{\rho_-} + p(\rho_+) - p(\rho_-)}{\rho_+ - \rho_-}}.
\end{equation}
Plugging this to \eqref{eq:RH1} and changing the notation from momentum $m_\pm$ to velocity $v_\pm = \frac{m_\pm}{\rho_\pm}$ we get the following useful formula
\begin{equation}\label{eq:useful}
 \rho_+\rho_-(v_+-v_-)^2 = (\rho_+-\rho_-)(p(\rho_+)-p(\rho_-)).
\end{equation}

Let us turn our attention now to the admissibility condition of the shocks: for this purpose we choose the entropy shock admissibility condition (cf. \cite[Section 8.5]{da}), 
since in this paper we work with admissible solutions in the sense of inequality \eqref{eq:energy inequality}. For discussion about various shock admissibility conditions see \cite[Chapter 8]{da}. 

Using the entropy inequality \eqref{eq:energy inequality} for the Euler equations we deduce that the shock is admissible if and only if (again, for convenience we use here notation with $v$ instead of $m$)
\begin{align}
 s\left(\rho_-\ep(\rho_-) - \rho_+\ep(\rho_+) + \frac{\rho_-v_-^2}{2} - \frac{\rho_+v_+^2}{2}\right) & \nonumber \\
  \leq (\rho_-\ep(\rho_-) + p(\rho_-))v_- - (\rho_+\ep(\rho_+) &+ p(\rho_+))v_+ + \frac{\rho_-v_-^3}{2} - \frac{\rho_+v_+^3}{2}.
\end{align}
Using \eqref{eq:RH1} we replace $s$ to get
\begin{align}
 \frac{\rho_-v_- - \rho_+v_+}{\rho_- - \rho_+}\left(\rho_-\ep(\rho_-) - \rho_+\ep(\rho_+) + \frac{\rho_-v_-^2}{2} - \frac{\rho_+v_+^2}{2}\right) & \nonumber \\
  \leq (\rho_-\ep(\rho_-) + p(\rho_-))v_- - (\rho_+\ep(\rho_+) + p(\rho_+))v_+ &+ \frac{\rho_-v_-^3}{2} - \frac{\rho_+v_+^3}{2}.
\end{align}
This yields
\begin{align}
 \frac{\rho_-\rho_+}{\rho_- - \rho_+}\left(-v_-\ep(\rho_+)-v_+\ep(\rho_-)-\frac{v_-v_+^2}{2} - \frac{v_-^2v_+}{2}\right) & \nonumber \\
 \leq (p(\rho_-)v_- - p(\rho_+)v_+) + \frac{\rho_-\rho_+}{\rho_--\rho_+}\Big(-v_-\ep(\rho_-) &- v_+\ep(\rho_+)-\frac{v_+^3}{2} - \frac{v_-^3}{2}\Big)
\end{align}
and further
\begin{equation}
 \frac{\rho_-\rho_+(v_--v_+)^2(v_-+v_+)}{2(\rho_--\rho_+)} \leq (p(\rho_-)v_- - p(\rho_+)v_+) + \frac{\rho_-\rho_+(v_--v_+)(\ep(\rho_+)-\ep(\rho_-))}{\rho_--\rho_+}.
\end{equation}
Using \eqref{eq:useful} on the left hand side we achieve
\begin{equation}
 (v_-+v_+)(p(\rho_-)-p(\rho_+)) \leq 2(p(\rho_-)v_- - p(\rho_+)v_+) + \frac{2\rho_-\rho_+(v_--v_+)(\ep(\rho_+)-\ep(\rho_-))}{\rho_--\rho_+}
\end{equation}
and thus
\begin{equation}\label{eq:admiss simplified}
 (v_+-v_-)\left(p(\rho_-)+p(\rho_+) - 2\rho_-\rho_+\frac{\ep(\rho_-)-\ep(\rho_+)}{\rho_--\rho_+}\right) \leq 0.
\end{equation}

\begin{lemma}\label{l:admiss function}
Let $p(\rho) = \rho^\gamma$ with $\gamma \geq 1$. Then it holds
\begin{equation}\label{eq:admiss expression}
 p(\rho_-)+p(\rho_+) - 2\rho_-\rho_+\frac{\ep(\rho_+)-\ep(\rho_-)}{\rho_+-\rho_-} > 0
\end{equation}
for any $\rho_- \neq \rho_+$, $\rho_\pm > 0$.
\end{lemma}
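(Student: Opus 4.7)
The plan is to exploit the constitutive relation $p(r)=r^{2}\varepsilon'(r)$, which for $p(\rho)=\rho^{\gamma}$ gives $\varepsilon'(r)=r^{\gamma-2}$ and hence
$$
\varepsilon(\rho_{+})-\varepsilon(\rho_{-}) \;=\; \int_{\rho_{-}}^{\rho_{+}} t^{\gamma-2}\,\d t.
$$
This representation is valid uniformly in $\gamma\geq 1$ (for $\gamma=1$ it produces $\ln(\rho_{+}/\rho_{-})$), so there is no need to split off the borderline case. The quotient on the left-hand side of \eqref{eq:admiss expression} is invariant under the swap $\rho_{-}\leftrightarrow\rho_{+}$, hence the whole expression is symmetric in the two densities and I may without loss of generality assume $\rho_{+}>\rho_{-}$. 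Setting $a=\rho_{-}$, $b=\rho_{+}$ and multiplying through by $b-a>0$, the inequality \eqref{eq:admiss expression} becomes equivalent to proving
$$
H(b) \;:=\; (a^{\gamma}+b^{\gamma})(b-a) \;-\; 2ab\int_{a}^{b} t^{\gamma-2}\,\d t \;>\; 0 \qquad\text{for every } b>a.
$$

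At this point the argument reduces to a one-variable calculus check. I would fix $a>0$, regard $H$ as a function of $b\in[a,\infty)$, and verify by direct differentiation that $H(a)=0$ and $H'(a)=0$. The key computation is then to establish
$$
H''(b) \;=\; \gamma(\gamma+1)\,b^{\gamma-2}(b-a),
$$
which, for $\gamma\geq 1$ and $b>a>0$, is strictly positive. Integrating twice from $b=a$ one obtains first $H'(b)>0$ and then $H(b)>0$ on $(a,\infty)$, which is precisely \eqref{eq:admiss expression}.

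The only slightly delicate step is the computation of $H''$ itself: when differentiating $H'(b)$ one must combine the derivative of the boundary contribution $-2ab^{\gamma-1}$, the derivative of $a^{\gamma}+b^{\gamma}$, and the derivative of $\gamma b^{\gamma-1}(b-a)$, and then observe that the terms still carrying the remaining integral cancel while the surviving polynomial terms collapse to the clean factor $\gamma(\gamma+1)b^{\gamma-2}(b-a)$. Apart from this routine but attentive bookkeeping, the proof is elementary and treats all $\gamma\geq 1$ simultaneously, avoiding any artificial split between $\gamma=1$ and $\gamma>1$.
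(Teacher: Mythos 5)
Your proof is correct, and at its core it is the same elementary strategy as the paper's: reduce to a one-variable function, check that its value and first derivative vanish at the left endpoint, show the second derivative is strictly positive, and integrate twice. What you do differently is keep $\varepsilon(\rho_+)-\varepsilon(\rho_-)=\int_{\rho_-}^{\rho_+}t^{\gamma-2}\,\d t$ as an integral instead of substituting the explicit antiderivative, and you parameterize by $b=\rho_+$ rather than by the ratio $z=\rho_+/\rho_-$. This is a genuine, if small, improvement: the paper's substitution produces $f''(z)=\gamma(\gamma^2-1)z^{\gamma-2}(z-1)$, whose prefactor vanishes at $\gamma=1$, so the paper is forced to treat $\gamma=1$ as a separate case with $\varepsilon=\log\rho$. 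Your $H''(b)=\gamma(\gamma+1)b^{\gamma-2}(b-a)$ stays strictly positive for all $\gamma\geq 1$ (I checked the differentiation of the boundary and integral terms and the claimed cancellations do occur), so the two cases are handled in one stroke. The paper's ratio substitution is marginally more economical notationally but buys nothing; your integral form is the cleaner of the two.
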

\begin{proof}
 The relation between $\ep(\rho)$ and $p(\rho)$ is 
 \begin{equation}
  p(\rho) = \rho^2\ep'(\rho)
 \end{equation}
 and therefore for $p(\rho) = \rho^\gamma$ with $\gamma > 1$ we have $\ep(\rho) = \frac{\rho^{\gamma-1}}{\gamma-1}$, whereas for $\gamma = 1$ it is $\ep(\rho) = \log\rho$.
 
 Further observe that, without loss of generality, we may assume that $\rho_+ > \rho_-$. Plugging in the relations for $p(\rho)$ and $\ep(\rho)$ we simplify \eqref{eq:admiss expression} in the case $\gamma > 1$ to
 \begin{equation}\label{eq:expression 2}
  (\gamma-1)(\rho_+^{\gamma+1}-\rho_-^{\gamma+1}) - (\gamma+1)\rho_+\rho_-(\rho_+^{\gamma-1}-\rho_-^{\gamma-1}) > 0.
 \end{equation}
 Dividing \eqref{eq:expression 2} by $\rho_-^{\gamma+1}$ and denoting $z = \frac{\rho_+}{\rho_-}$ it remains to prove that it holds
 \begin{equation}
  f(z) = (\gamma-1)(z^{\gamma+1}-1) - (\gamma+1)z(z^{\gamma-1}-1) > 0
 \end{equation}
 for $z > 1$. However it is not difficult to show that 
 \begin{equation}
  f''(z) = \gamma(\gamma^2-1)z^{\gamma-2}(z-1)
 \end{equation}
 and since $f(1) = f'(1) = 0$ and $f''(z) > 0$ for $z > 1$ and $\gamma > 1$ we conclude that $f(z)$ is convex and increasing function on interval $(1,\infty)$, in particular $f(z) > 0$ for $z > 1$.
 
 We use the same arguments also in the case $\gamma = 1$ which yields instead of \eqref{eq:expression 2}
\begin{equation}\label{eq:expression 3}
  \rho_+^2-\rho_-^2 - 2\rho_-\rho_+\log\frac{\rho_+}{\rho_-} > 0.
 \end{equation}
 We therefore introduce
\begin{equation}
  f(z) = z^2-2z\log z - 1
 \end{equation}
 and argue again that $f(1) = f'(1) = 0$ and $f''(z) > 0$ for $z > 1$, thus $f(z)$ is increasing on $(1,\infty)$ and in particular $f(z) > 0$ for $z > 1$. The proof is finished.
\end{proof}

Returning to \eqref{eq:admiss simplified} we immediately get the following
\begin{lemma}\label{l:admiss condition}
 Let $p(\rho) = \rho^\gamma$ with $\gamma \geq 1$  and let $\rho_\pm > 0$. The states $(\rho_-,\rho_-v_-)$ on the left and $(\rho_+,\rho_+v_+)$ on the right are connected with an admissible shock if and only if $v_- \geq v_+$ and the Rankine-Hugoniot conditions \eqref{eq:RH1}-\eqref{eq:RH2} are satisfied.
\end{lemma}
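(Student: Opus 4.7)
The statement is essentially a corollary of the work already carried out immediately above. My plan is as follows.

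First, I would invoke the chain of algebraic manipulations leading to \eqref{eq:admiss simplified}: these show that, \emph{under the Rankine--Hugoniot relations} \eqref{eq:RH1}--\eqref{eq:RH2} (in particular through their consequence \eqref{eq:useful}), the entropy inequality \eqref{eq:energy inequality} evaluated across the shock is equivalent to
\begin{equation*}
 (v_+-v_-)\left(p(\rho_-)+p(\rho_+) - 2\rho_-\rho_+\tfrac{\ep(\rho_-)-\ep(\rho_+)}{\rho_--\rho_+}\right) \leq 0.
\end{equation*}
Each step in that derivation is a reversible algebraic manipulation (multiplying by $2$, clearing common denominators, substituting $\rho_-\rho_+(v_--v_+)^2$ by means of \eqref{eq:useful}), so the reformulation is a genuine equivalence, not merely an implication.

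Second, since the difference quotient $\tfrac{\ep(\rho_-)-\ep(\rho_+)}{\rho_--\rho_+}$ is symmetric under swapping the subscripts $+$ and $-$, the second factor in \eqref{eq:admiss simplified} coincides with the quantity proved strictly positive in Lemma \ref{l:admiss function} whenever $\rho_\pm > 0$ and $\rho_- \neq \rho_+$. Consequently the inequality is equivalent to the sign condition $v_+ - v_- \leq 0$, that is, $v_- \geq v_+$. A genuine shock forces $\rho_- \neq \rho_+$ in any case: by \eqref{eq:useful}, $\rho_- = \rho_+$ would imply $v_- = v_+$ and the discontinuity would degenerate, so this is the only regime that needs to be covered.

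There is no real obstacle to overcome here; the lemma is a bookkeeping assembly of the preceding derivation and Lemma \ref{l:admiss function}. The only genuine calculation in the whole argument is the convexity analysis of the auxiliary function $f(z)$ used in the proof of Lemma \ref{l:admiss function}, which has already been done.
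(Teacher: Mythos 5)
Your proposal is correct and follows essentially the same route as the paper, which simply writes that the lemma follows ``immediately'' from \eqref{eq:admiss simplified} together with Lemma \ref{l:admiss function}. Your two added observations --- that the derivation of \eqref{eq:admiss simplified} from the entropy jump inequality consists entirely of reversible algebraic rearrangements (so that under the Rankine--Hugoniot relations the entropy condition is \emph{equivalent} to \eqref{eq:admiss simplified}, not merely implied by it), and that \eqref{eq:useful} forces $\rho_-\neq\rho_+$ for any genuine discontinuity so that the strict-positivity hypothesis of Lemma \ref{l:admiss function} applies --- are exactly what makes the one-line deduction legitimate, and match what the paper leaves implicit.
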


\subsection{Characterization of simple waves}

We distinguish two classes of admissible shocks, namely $1-$shocks related to the characteristic speed $\lambda_1 = v - \sqrt{p'(\rho)} $ and $3-$shocks related to the characteristic speed $\lambda_3 = v + \sqrt{p'(\rho)}$ introduced in \eqref{eq:eigenvalues}.

Starting with relation \eqref{eq:useful} and having in mind that admissible shocks have to satisfy $v_- \geq v_+$ we immediately achieve
\begin{equation}
 v_- = v_+ + \sqrt{\frac{(\rho_+-\rho_-)(p(\rho_+)-p(\rho_-))}{\rho_+\rho_-}}.
\end{equation}
Plugging this to the formula for the shock speed \eqref{eq:shock speed} we get after some easy calculations
\begin{equation}
 s = \pm \sqrt{\left(v_+ - \text{sign}(\rho_+-\rho_-)\sqrt{\frac{\rho_-(p(\rho_+)-p(\rho_-))}{\rho_+(\rho_+-\rho_-)}}\right)^2}.
\end{equation}
We therefore conclude, that $1-$shocks have to satisfy $\rho_+ > \rho_-$ and $3-$shocks satisfy $\rho_+ < \rho_-$.

In the case of system \eqref{eq:reduced system} it is easy to characterize the rarefaction waves, since the classical theory yields that every $i-$Riemann invariant is constant along any $i-$rarefaction waves, see \cite[Theorem 7.6.6]{da}.

We can now fully characterize admissible shocks and rarefaction waves, thus all simple $i-$wave curves in the state space, $i = 1,3$.

\begin{lemma}\label{l:simple waves}
 Let $p(\rho) = \rho^\gamma$ with $\gamma \geq 1$ and let $\rho_\pm > 0$. The states $(\rho_-,\rho_-v_-)$ on the left and $(\rho_+,\rho_+v_+)$ on the right are connected with
 \begin{itemize}
  \item admissible $1-$shock if and only if
  \begin{align}
   \rho_+ &> \rho_- \\
   v_- &= v_+ + \sqrt{\frac{(\rho_+-\rho_-)(p(\rho_+)-p(\rho_-))}{\rho_+\rho_-}} \\
   \text{the speed of the shock is } \quad s &= v_+ - \sqrt{\frac{\rho_-(p(\rho_+)-p(\rho_-))}{\rho_+(\rho_+-\rho_-)}}
  \end{align}
  \item admissible $3-$shock if and only if
  \begin{align}
   \rho_+ &< \rho_- \\
   v_- &= v_+ + \sqrt{\frac{(\rho_+-\rho_-)(p(\rho_+)-p(\rho_-))}{\rho_+\rho_-}} \\
   \text{the speed of the shock is } \quad s &= v_+ + \sqrt{\frac{\rho_-(p(\rho_+)-p(\rho_-))}{\rho_+(\rho_+-\rho_-)}}
  \end{align}
  \item $1-$rarefaction wave if and only if 
  \begin{align}
   \rho_+ &< \rho_- \\
   v_- &= v_+ - \int_{\rho_+}^{\rho_-}\frac{\sqrt{p'(\tau)}}{\tau} \d\tau
  \end{align}
  \item $3-$rarefaction wave if and only if 
  \begin{align}
   \rho_+ &> \rho_- \\
   v_- &= v_+ - \int_{\rho_-}^{\rho_+}\frac{\sqrt{p'(\tau)}}{\tau} \d\tau.
  \end{align}
 \end{itemize}
\end{lemma}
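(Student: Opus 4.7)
My plan is to verify each bullet by combining algebraic relations already derived in the preceding discussion with two classical inputs: the Lax entropy condition for shocks, and constancy of $i$-Riemann invariants along $i$-rarefactions. For both shock bullets I start from \eqref{eq:useful} together with the admissibility inequality $v_-\ge v_+$ from Lemma~\ref{l:admiss condition}. Taking the positive square root yields
\[
v_- - v_+ \;=\; \sqrt{\frac{(\rho_+-\rho_-)(p(\rho_+)-p(\rho_-))}{\rho_+\rho_-}},
\]
which is real because $p$ is increasing, so both factors under the radical have the same sign. Substituting this relation into \eqref{eq:shock speed} reduces the radicand to a perfect square and gives $s=\pm\bigl(v_+-\operatorname{sign}(\rho_+-\rho_-)B\bigr)$ with $B=\sqrt{\rho_-(p(\rho_+)-p(\rho_-))/[\rho_+(\rho_+-\rho_-)]}$, exactly as noted in the paragraph preceding the statement. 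I would then separate the $1$- and $3$-branches using the Lax inequalities $\lambda_1(U_+)\le s\le\lambda_1(U_-)$ and $\lambda_3(U_+)\le s\le\lambda_3(U_-)$, respectively; since $\lambda_1<\lambda_2=v_\pm<\lambda_3$, in the $1$-shock case $s<v_\pm$ while in the $3$-shock case $s>v_\pm$. Together with the genuine nonlinearity of the first and third characteristic families for $p(\rho)=\rho^\gamma$, $\gamma\ge 1$, this forces $\rho_+>\rho_-$ with $s=v_+-B$ in the $1$-shock case and $\rho_+<\rho_-$ with $s=v_++B$ in the $3$-shock case. The converse directions of each ``iff'' are then direct: the stated formulas solve \eqref{eq:RH1}--\eqref{eq:RH2} and satisfy $v_-\ge v_+$, hence define admissible shocks via Lemma~\ref{l:admiss condition}.

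For the rarefaction bullets I invoke the classical fact that $i$-Riemann invariants are constant along $i$-rarefactions. By \eqref{eq:invariants}, the $1$-Riemann invariant is $w_3=m/\rho+\int_0^\rho \sqrt{p'(\tau)}/\tau\,d\tau$, and enforcing $w_3(U_-)=w_3(U_+)$ yields precisely $v_-=v_+-\int_{\rho_+}^{\rho_-}\sqrt{p'(\tau)}/\tau\,d\tau$. The analogous calculation with the $3$-Riemann invariant $w_1$ produces the $3$-rarefaction identity. The density inequalities then follow from the requirement that $\lambda_i$ strictly increase from left to right across the rarefaction fan: differentiating $\lambda_1=v-\sqrt{p'(\rho)}$ along integral curves of $R_1$, where the invariant relation gives $dv/d\rho=-\sqrt{p'(\rho)}/\rho$, and using $p(\rho)=\rho^\gamma$, a short calculation produces $d\lambda_1/d\rho<0$, so $\rho$ must decrease across a $1$-rarefaction, i.e.\ $\rho_+<\rho_-$. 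The $3$-case is symmetric.

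The main obstacle, such as it is, lies in tracking signs in the shock-speed reduction and deploying the Lax condition correctly to pin down the direction of the density jump in each family. The remaining algebra, including the short verification of genuine nonlinearity for every $\gamma\ge 1$, is elementary.
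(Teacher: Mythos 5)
Your proposal is correct and follows essentially the same route as the paper. The paper does not give a separate formal proof of this lemma; the statement is a summary of the discussion immediately preceding it, which uses exactly the ingredients you identify: \eqref{eq:useful} combined with Lemma~\ref{l:admiss condition} to produce the velocity-jump formula, substitution into \eqref{eq:shock speed} to obtain $s = v_+ - \operatorname{sign}(\rho_+-\rho_-)\sqrt{\rho_-(p(\rho_+)-p(\rho_-))/[\rho_+(\rho_+-\rho_-)]}$, the Lax characterization of $1$- versus $3$-shocks to pin down the direction of the density jump, and constancy of the $i$-Riemann invariants from \eqref{eq:invariants} together with monotonicity of $\lambda_i$ across the fan for the rarefaction bullets. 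You spell out the Lax inequalities and the sign of $d\lambda_1/d\rho$ along the $1$-rarefaction curve more explicitly than the paper does, but the underlying argument is the same.
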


\subsection{Solutions to the Riemann problem}

Using Lemma \ref{l:simple waves} we now characterize the types of self-similar solutions to the Riemann problem \eqref{eq:reduced system}--\eqref{eq:Rdata m2}.
\begin{lemma}\label{l:Riemann selfsimilar solutions}
 Let $\rho_\pm,v_\pm$ be given constants, $\rho_\pm > 0$, and let $p(\rho) = \rho^\gamma$ with $\gamma \geq 1$. Assume for simplicity that $(\rho_-,\rho_-v_-)$ and $(\rho_+,\rho_+v_+)$ do not lie on any simple $i-$wave curve (otherwise the form of the self-similar solution is obvious and given directly by Lemma \ref{l:simple waves}).
\begin{itemize}
 \item[1)] If
 \begin{equation}
  v_+ - v_- \geq \int_0^{\rho_-} \frac{\sqrt{p'(\tau)}}{\tau} \d\tau + \int_0^{\rho_+} \frac{\sqrt{p'(\tau)}}{\tau} \d\tau,
 \end{equation}
  then there exists a unique self-similar solution to the Riemann problem \eqref{eq:reduced system}--\eqref{eq:Rdata m2} consisting of a $1-$rarefaction wave and a $3-$rarefaction wave. The intermediate state is vacuum, i.e. $\rho_m = 0$. 
(For detailed analysis of Riemann problems with vacuum see \cite{LiSm}.)
 \item[2)] If
 \begin{equation}
 \abs{\int_{\rho_-}^{\rho_+} \frac{\sqrt{p'(\tau)}}{\tau} \d\tau} < v_+ - v_- < \int_0^{\rho_-} \frac{\sqrt{p'(\tau)}}{\tau} \d\tau + \int_0^{\rho_+} \frac{\sqrt{p'(\tau)}}{\tau} \d\tau,
 \end{equation}
 then there exists a unique self-similar solution to the Riemann problem \eqref{eq:reduced system}--\eqref{eq:Rdata m2} consisting of a $1-$rarefaction wave and a $3-$rarefaction wave. The intermediate state $(\rho_m,\rho_mv_m)$ is given as a unique solution of the system of equations
 \begin{align}
  v_+ - v_- &= \int_{\rho_m}^{\rho_-} \frac{\sqrt{p'(\tau)}}{\tau} \d\tau + \int_{\rho_m}^{\rho_+} \frac{\sqrt{p'(\tau)}}{\tau} \d\tau \\
  v_m &= v_- + \int_{\rho_m}^{\rho_-} \frac{\sqrt{p'(\tau)}}{\tau} \d\tau.
 \end{align}
 \item[3)] If $\rho_- > \rho_+$ and 
 \begin{equation}
 -\sqrt{\frac{(\rho_--\rho_+)(p(\rho_-)-p(\rho_+))}{\rho_-\rho_+}} < v_+ - v_- < \int_{\rho_+}^{\rho_-} \frac{\sqrt{p'(\tau)}}{\tau} \d\tau,
 \end{equation}
 then there exists a unique self-similar solution to the Riemann problem \eqref{eq:reduced system}--\eqref{eq:Rdata m2} consisting of a $1-$rarefaction wave and an admissible $3-$shock. The intermediate state $(\rho_m,\rho_mv_m)$ is given as a unique solution of the system of equations
 \begin{align}
  v_+ - v_- &= \int_{\rho_m}^{\rho_-} \frac{\sqrt{p'(\tau)}}{\tau} \d\tau - \sqrt{\frac{(\rho_m-\rho_+)(p(\rho_m)-p(\rho_+))}{\rho_m\rho_+}} \\
  v_m &= v_- + \int_{\rho_m}^{\rho_-} \frac{\sqrt{p'(\tau)}}{\tau} \d\tau.
 \end{align}
 \item[4)] If $\rho_- < \rho_+$ and 
 \begin{equation}
 -\sqrt{\frac{(\rho_+-\rho_-)(p(\rho_+)-p(\rho_-))}{\rho_+\rho_-}} < v_+ - v_- < \int_{\rho_-}^{\rho_+} \frac{\sqrt{p'(\tau)}}{\tau} \d\tau,
 \end{equation}
 then there exists a unique self-similar solution to the Riemann problem \eqref{eq:reduced system}--\eqref{eq:Rdata m2} consisting of an admissible $1-$shock and a $3-$rarefaction wave. The intermediate state $(\rho_m,\rho_mv_m)$ is given as a unique solution of the system of equations
 \begin{align}
  v_+ - v_- &= \int_{\rho_m}^{\rho_+} \frac{\sqrt{p'(\tau)}}{\tau} \d\tau - \sqrt{\frac{(\rho_m-\rho_-)(p(\rho_m)-p(\rho_-))}{\rho_m\rho_-}} \\
  v_m &= v_- - \sqrt{\frac{(\rho_m-\rho_-)(p(\rho_m)-p(\rho_-))}{\rho_m\rho_-}}.
 \end{align}
 \item[5)] If 
 \begin{equation}
 v_+ - v_- < -\sqrt{\frac{(\rho_+-\rho_-)(p(\rho_+)-p(\rho_-))}{\rho_+\rho_-}}
 \end{equation}
 then there exists a unique self-similar solution to the Riemann problem \eqref{eq:reduced system}--\eqref{eq:Rdata m2} consisting of an admissible $1-$shock and an admissible $3-$shock. The intermediate state $(\rho_m,\rho_mv_m)$ is given as a unique solution of the system of equations
 \begin{align}
  v_+ - v_- &= - \sqrt{\frac{(\rho_m-\rho_-)(p(\rho_m)-p(\rho_-))}{\rho_m\rho_-}} - \sqrt{\frac{(\rho_m-\rho_+)(p(\rho_m)-p(\rho_+))}{\rho_m\rho_+}} \\
  v_m &= v_- - \sqrt{\frac{(\rho_m-\rho_-)(p(\rho_m)-p(\rho_-))}{\rho_m\rho_-}}.
 \end{align}
 \end{itemize}
\end{lemma}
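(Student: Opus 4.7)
The plan is to adapt the standard phase-plane argument for $2\times 2$ strictly hyperbolic systems with genuinely nonlinear first and third characteristic families. For fixed data I would parametrize by the intermediate density $\rho_m>0$ both the \emph{forward 1-wave curve} $\Phi$ emanating from $(\rho_-,v_-)$, i.e.\ the locus of states reachable from $(\rho_-,v_-)$ by an admissible $1$-shock (if $\rho_m\geq\rho_-$) or a $1$-rarefaction (if $\rho_m\leq\rho_-$), and the \emph{backward 3-wave curve} $\Psi$ arriving at $(\rho_+,v_+)$, defined analogously. Lemma~\ref{l:simple waves} provides the explicit formulas for the corresponding middle velocity $v_m$ on each branch. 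A self-similar Riemann solution with a $1$-wave followed by a $3$-wave exists if and only if these two graphs intersect in the $(\rho_m,v_m)$-plane, and the classification into the five cases is determined by the position of the intersection density $\rho_m^*$ relative to $\rho_-$ and $\rho_+$.

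The first step is to show that $\Phi$ is a strictly decreasing $C^1$ function of $\rho_m\in(0,\infty)$ and $\Psi$ strictly increasing and $C^1$. Continuity and matching of derivatives at the transitions $\rho_m=\rho_-$ and $\rho_m=\rho_+$ between shock and rarefaction branches is a routine verification from the Rankine--Hugoniot conditions. Monotonicity on the rarefaction branch is immediate since $\sqrt{p'(\tau)}/\tau>0$. On the shock branch it reduces to showing that
\begin{equation*}
 h(\rho_m) := \frac{(\rho_m-\rho_-)(p(\rho_m)-p(\rho_-))}{\rho_m\rho_-}
\end{equation*}
is strictly increasing for $\rho_m>\rho_-$, which follows from a short logarithmic-derivative computation using $p(\rho)=\rho^\gamma$ with $\gamma\geq 1$ (and analogously for the $3$-shock branch of $\Psi$).

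Next I would identify the boundary behaviour
\begin{equation*}
 \lim_{\rho_m\to 0^+}\Phi(\rho_m) = v_-+\int_0^{\rho_-}\frac{\sqrt{p'(\tau)}}{\tau}\,\d\tau, \qquad \lim_{\rho_m\to\infty}\Phi(\rho_m)=-\infty,
\end{equation*}
together with $\lim_{\rho_m\to 0^+}\Psi = v_+-\int_0^{\rho_+}\sqrt{p'(\tau)}/\tau\,\d\tau$ and $\lim_{\rho_m\to\infty}\Psi=+\infty$. By strict monotonicity, $\Phi$ and $\Psi$ meet in at most one point, and they fail to meet precisely when $\lim_{\rho_m\to 0^+}\Psi\geq\lim_{\rho_m\to 0^+}\Phi$; this is exactly the inequality in case~1, in which no finite-density intermediate state exists and vacuum appears. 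In every other regime, a unique $\rho_m^*>0$ is produced by the intermediate value theorem, and the pair of equations $\Phi(\rho_m^*)=\Psi(\rho_m^*)=v_m$, with the appropriate branches substituted from Lemma~\ref{l:simple waves}, is exactly the system displayed in the corresponding case.

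Finally, the dichotomy between cases 2--5 is obtained by evaluating $\Phi$ and $\Psi$ at $\rho_m=\rho_-$ and $\rho_m=\rho_+$. For instance, $\rho_m^*<\min(\rho_-,\rho_+)$ (two rarefactions) is equivalent to $\Phi<\Psi$ at $\rho_m=\min(\rho_-,\rho_+)$, which after substituting the rarefaction formulas becomes $v_+-v_->\bigl|\int_{\rho_-}^{\rho_+}\sqrt{p'(\tau)}/\tau\,\d\tau\bigr|$; analogous evaluations at $\rho_-$ and $\rho_+$ yield the thresholds separating cases 3, 4 and 5, and each threshold matches the expression announced in the lemma. The main technical obstacle is the strict monotonicity of the shock branches of $\Phi$ and $\Psi$, but this amounts to the direct elementary computation indicated above and is uniform in $\gamma\geq 1$.
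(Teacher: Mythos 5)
Your argument is correct and is precisely the standard Lax phase-plane construction that the paper's one-line proof delegates to Lemma~\ref{l:simple waves}, to uniqueness of self-similar solutions from \cite[Proposition~8.1]{ChDLKr}, and to \cite[Chapter~9]{da}; so it spells out the same route the authors invoke rather than taking a different one. The only point worth making explicit is the strict monotonicity of the shock branch, which you correctly flag as the technical crux: for $p(\rho)=\rho^\gamma$, $\gamma\geq 1$, the map $\rho\mapsto \rho^2\,h'(\rho)\,\rho_-=\gamma\rho^{\gamma+1}-(\gamma-1)\rho_-\rho^\gamma-\rho_-^{\gamma+1}$ vanishes at $\rho=\rho_-$ and has positive derivative there and beyond, so $h'>0$ on $(\rho_-,\infty)$ (and one checks $h'(\rho_-)=0$, $h''(\rho_-)=2p'(\rho_-)/\rho_-^2>0$, which also gives the $C^1$ matching of derivatives with the rarefaction branch that you assert). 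It is also worth noting that for $\gamma=1$ the integral $\int_0^{\rho}\sqrt{p'(\tau)}/\tau\,\d\tau$ diverges, so $\Phi(\rho_m)\to+\infty$ as $\rho_m\to 0^+$ and case 1 is vacuous, which is consistent with the lemma as stated.
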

\begin{proof}
 The proof is an easy application of Lemma \ref{l:simple waves}, the uniqueness in the class of self-similar solutions is a consequence of \cite[Proposition 8.1]{ChDLKr}. We refer the reader also to \cite[Chapter 9]{da} for general methods of solving Riemann problems.
\end{proof}

\section{Subsolutions}\label{s:I}

In this section we first provide all necessary definitions for the rest of the paper, then we recall the main ingredients
needed in the proof of Theorem \ref{t:main0} which are inherited by the construction carried out in \cite[Section 3]{ChDLKr}.

\begin{definition}[Weak solution]\label{d:weak}
By a {\em weak solution} of \eqref{eq:Euler system} on $\R^2\times[0,\infty)$ we
mean a pair $(\rho, v)\in L^\infty(\R^2\times [0,\infty))$ such that the following identities 
hold for every test functions $\psi\in C_c^{\infty}(\R^2\times [0, \infty))$,
$\phi\in C_c^{\infty}(\R^2\times [0, \infty))$:
\begin{equation} \label{eq:weak1}
\int_0^\infty \int_{\R^2} \left[\rho\de_t \psi+ \rho v \cdot \nabla_x \psi\right] \d x \d t + \int_{\R^2} \rho^0(x)\psi(x,0) \d x = 0
\end{equation}
\begin{align} \label{eq:weak2}
&\int_0^\infty \int_{\R^2} \left[ \rho v \cdot \de_t \phi+ \rho v \otimes v : \nabla_x \phi +p(\rho) \div_x \phi \right] \d x \d t
+\int_{\R^2} \rho^0(x) v^0(x)\cdot\phi(x,0) \d x=0.
\end{align}
\end{definition}

\begin{definition}[Admissible weak solution]\label{d:admissible}
A bounded weak solution $(\rho, v)$ of \eqref{eq:Euler system} is {\em admissible} if
it satisfies the following inequality for every nonnegative 
test function $\varphi\in C_c^{\infty}(\R^2\times [0,\infty))$:
\begin{align} \label{eq:admissibility condition}
 &\int_0^\infty\int_{\R^2} \left[\left(\rho\varepsilon(\rho)+\rho \frac{\abs{v}^2}{2}\right)\de_t \varphi+\left(\rho\varepsilon(\rho)+\rho
\frac{\abs{v}^2}{2}+p(\rho)\right) v \cdot \nabla_x \varphi \right] \d x \d t\notag \\
&+\int_{\R^2} \left(\rho^0 (x) \varepsilon(\rho^0 (x))+\rho^0 (x)\frac{\abs{v^0 (x)}^2}{2}\right) 
\varphi(x,0) \d x \geq 0 .
\end{align}
\end{definition}

We now introduce the notion of \textit{fan subsolution} as in \cite{ChDLKr}.

\begin{definition}[Fan partition]\label{d:fan}
A {\em fan partition} of $\R^2\times (0, \infty)$ consists of three open sets $P_-, P_1, P_+$
of the following form 
\begin{align}
 P_- &= \{(x,t): t>0 \quad \mbox{and} \quad x_2 < \nu_- t\}\\
 P_1 &= \{(x,t): t>0 \quad \mbox{and} \quad \nu_- t < x_2 < \nu_+ t\}\\
 P_+ &= \{(x,t): t>0 \quad \mbox{and} \quad x_2 > \nu_+ t\},
\end{align}
where $\nu_- < \nu_+$ is an arbitrary couple of real numbers.
\end{definition}

We denote by $\Sym_0^{2\times2}$ the set of all symmetric $2\times2$ matrices with zero trace.

\begin{definition}[Fan subsolution] \label{d:subs}
A {\em fan subsolution} to the compressible Euler equations \eqref{eq:Euler system} with
initial data \eqref{eq:R_data} is a triple 
$(\overline{\rho}, \overline{v}, \overline{u}): \R^2\times 
(0,\infty) \rightarrow (\R^+, \R^2, \Sym_0^{2\times2})$ of piecewise constant functions satisfying
the following requirements.
\begin{itemize}
\item[(i)] There is a fan partition $P_-, P_1, P_+$ of $\R^2\times (0, \infty)$ such that
\[
(\overline{\rho}, \overline{v}, \overline{u})=  
(\rho_-, v_-, u_-) \bm{1}_{P_-}
+ (\rho_1, v_1, u_1) \bm{1}_{P_1}
+ (\rho_+, v_+, u_+) \bm{1}_{P_+}
\]
where $\rho_1, v_1, u_1$ are constants with $\rho_1 >0$ and $u_\pm =
v_\pm\otimes v_\pm - \textstyle{\frac{1}{2}} |v_\pm|^2 \id$;
\item[(ii)] There exists a positive constant $C$ such that
\begin{equation} \label{eq:subsolution 2}
v_1\otimes v_1 - u_1 < \frac{C}{2} \id\, ;
\end{equation}
\item[(iii)] The triple $(\overline{\rho}, \overline{v}, \overline{u})$ solves the following system in the
sense of distributions:
\begin{align}
&\partial_t \overline{\rho} + {\rm div}_x (\overline{\rho} \, \overline{v}) \;=\; 0\label{eq:continuity}\\
&\partial_t (\overline{\rho} \, \overline{v})+{\rm div}_x \left(\overline{\rho} \, \overline{u} 
\right) + \nabla_x \left( p(\overline{\rho})+\frac{1}{2}\left( C \rho_1
\bm{1}_{P_1} + \overline{\rho} |\overline{v}|^2 \bm{1}_{P_+\cup P_-}\right)\right)= 0.\label{eq:momentum}
\end{align}
\end{itemize}
\end{definition}

\begin{definition}[Admissible fan subsolution]\label{d:admiss}
 A fan subsolution $(\overline{\rho}, \overline{v}, \overline{u})$ is said to be {\em admissible}
if it satisfies the following inequality in the sense of distributions
\begin{align} 
&\de_t \left(\overline{\rho} \varepsilon(\overline{\rho})\right)+\div_x
\left[\left(\overline{\rho}\varepsilon(\overline{\rho})+p(\overline{\rho})\right) \overline{v}\right]
 + \de_t \left( \overline{\rho} \frac{|\overline{v}|^2}{2} \bm{1}_{P_+\cup P_-} \right)
+ \div_x \left(\overline{\rho} \frac{|\overline{v}|^2}{2} \overline{v} \bm{1}_{P_+\cup P_-}\right)\nonumber\\
&\qquad\qquad+ \left[\de_t\left(\rho_1 \, \frac{C}{2} \, \bm{1}_{P_1}\right) 
+ \div_x\left(\rho_1 \, \overline{v} \, \frac{C}{2}  \, \bm{1}_{P_1}\right)\right]
\;\leq\; 0\, .\label{eq:admissible subsolution}
\end{align}
\end{definition}

The following Proposition is the key ingredient in the presented theory and is proved in \cite{ChDLKr}. Nevertheless, we recall the main ideas of the proof also here for reader's convenience.

\begin{proposition}\label{p:subs}
Let $p$ be any $C^1$ function and $(\rho_\pm, v_\pm)$ be such that there exists at least one
admissible fan subsolution $(\overline{\rho}, \overline{v}, \overline{u})$ of \eqref{eq:Euler system}
with initial data \eqref{eq:R_data}. Then there are infinitely 
many bounded admissible solutions $(\rho, v)$ to \eqref{eq:Euler system}-\eqref{eq:energy inequality}, \eqref{eq:R_data} such that 
$\rho=\overline{\rho}$ and $\abs{v}^2\bm{1}_{P_1} = C$.
\end{proposition}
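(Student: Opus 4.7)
The plan is to look for admissible solutions that coincide with the subsolution on $P_-\cup P_+$ and to modify only the velocity in the middle region $P_1$, keeping $\rho=\rho_1$ there. Since $\rho_1$ is a positive constant on $P_1$, the pressure contribution $\nabla p(\rho_1)$ vanishes and the Euler system on $P_1$ becomes the incompressible-Euler-type system $\div v = 0$ and $\partial_t v + \div(v\otimes v)=0$. Writing $v\otimes v = u + \frac12 |v|^2 \id$ with $u\in\Sym_0^{2\times2}$, a direct computation (using that $(\overline{\rho},\overline{v},\overline{u})$ solves \eqref{eq:continuity}--\eqref{eq:momentum}) shows that the global distributional continuity and momentum equations for the new pair $(\rho,v)$ coincide with those of the subsolution provided that on $P_1$ one has $|v|^2 = C$ a.e.\ and the pair $(v,u)$ solves the linear system $\partial_t v + \div u = 0$, $\div v = 0$ in $\mathcal{D}'(\R^2\times(0,\infty))$, with $(v,u)$ matching $(v_1,u_1)$ near $\partial P_1$.

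To construct such $v$ we invoke the De Lellis--Sz\'ekelyhidi convex-integration scheme of \cite{dls1,dls2} as adapted to the compressible setting in \cite{ChDLKr}. The subsolution inequality $v_1\otimes v_1 - u_1 < \frac{C}{2}\id$ places $(v_1,u_1)$ in the interior of the relaxed constraint set $K^{co}=\{(v,u):v\otimes v - u \leq \tfrac{C}{2}\id\}$, while the target constraint set is $K=\{(v,u):v\otimes v - u = \tfrac{C}{2}\id\}$. A plane-wave analysis on the wave cone of the linear system produces oscillatory perturbations of $(v_1,u_1)$, compactly supported in $P_1$, that respect the linearized equations and push $(v,u)$ towards $K$. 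A Baire category argument in the $L^\infty$-weak-$*$ closure of the bounded admissible perturbations then shows that the pointwise constraint $v\otimes v - u = \tfrac{C}{2}\id$ is attained a.e.\ on a residual, hence uncountable, set of $(v,u)$. The compact support of the perturbations inside $P_1$ preserves the boundary traces on $\partial P_1$ and hence the global distributional identities.

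Once such $(v,u)$ is fixed, admissibility of the resulting $(\rho,v)$ is automatic: on $P_-\cup P_+$ it coincides with the subsolution, while on $P_1$ the identities $\rho=\rho_1$ and $|v|^2 = C$ give kinetic energy density $\tfrac12 \rho_1 |v|^2 = \tfrac12 C\rho_1$, which is precisely the quantity $\tfrac12 C\rho_1 \bm{1}_{P_1}$ appearing in \eqref{eq:admissible subsolution}. Hence the energy inequality \eqref{eq:admissibility condition} for $(\rho,v)$ reduces to \eqref{eq:admissible subsolution}, which holds by hypothesis. The initial data are trivially preserved since no modification is made at $t=0$, yielding infinitely many admissible weak solutions.

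The principal technical obstacle is the convex-integration step itself: one must construct plane-wave building blocks compatible with the linear constraints and compactly supported in $P_1$, and then run the Baire category argument with enough compactness to guarantee that the $L^\infty$-weak-$*$ limits actually attain $K$ pointwise. All of this is carried out in the general framework of \cite[Section 3]{ChDLKr}. Note that the specific pressure law $p(\rho)=\rho^\gamma$ enters only through the constant value $p(\rho_1)$ on $P_1$ and is irrelevant to the convex-integration reduction, so the argument extends to our setting with only cosmetic modifications.
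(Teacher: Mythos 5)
Your proposal follows essentially the same route as the paper: fix $\rho=\overline{\rho}$ (so $\rho=\rho_1$ is constant on $P_1$), reduce the momentum equation on $P_1$ to the linear system $\partial_t\underline{v}+\div_x\underline{u}=0$, $\div_x\underline{v}=0$ for the perturbation $(\underline{v},\underline{u})$, and invoke the De~Lellis--Sz\'ekelyhidi convex-integration/Baire-category scheme (the paper's Lemma~\ref{l:ci}, taken from \cite{ChDLKr}) with $\Omega=P_1$, $(\tilde v,\tilde u)=(v_1,u_1)$, $C_0=C$ to attain the pointwise constraint $|v|^2=C$ on a residual set, then check that the admissibility inequality reduces to \eqref{eq:admissible subsolution}. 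This is exactly the argument sketched in Section~\ref{s:I} and carried out in detail in \cite[Section~3]{ChDLKr}, so your proof is correct and coincides with the paper's.
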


Roughly speaking, the infinitely many bounded admissible solutions $(\rho,v)$ are constructed by adding to the subsolution solutions to 
 the linearized pressureless incompressible Euler equations supported in $P_1$. Such solutions are given by the following Lemma, cf. \cite[Lemma 3.7]{ChDLKr}.

\begin{lemma}\label{l:ci}
Let $(\tilde{v}, \tilde{u})\in \R^2\times \Sym_0^{2\times 2}$ and $C_0>0$ be such that $\tilde{v}\otimes \tilde{v}
- \tilde{u} < \frac{C_0}{2} \id$. For any open set $\Omega\subset \R^2\times \R$ there are infinitely many maps
$(\underline{v}, \underline{u}) \in L^\infty (\R^2\times \R , \R^2\times \Sym_0^{2\times 2})$ with the following property
\begin{itemize}
\item[(i)] $\underline{v}$ and $\underline{u}$ vanish identically outside $\Omega$;
\item[(ii)] $\div_x \underline{v} = 0$ and $\partial_t \underline{v} + \div_x \underline{u} = 0$;
\item[(iii)] $ (\tilde{v} + \underline{v})\otimes (\tilde{v} + \underline{v}) - (\tilde{u} + \underline{u}) = \frac{C_0}{2} \id$
a.e. on $\Omega$.
\end{itemize}
\end{lemma}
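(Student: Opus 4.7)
The plan is to recognize the statement as a differential inclusion amenable to the De Lellis--Sz\'ekelyhidi convex integration scheme, essentially identical to the one used in \cite[Section 3]{ChDLKr} and \cite{dls1}. Setting $v := \tilde{v} + \underline{v}$ and $u := \tilde{u} + \underline{u}$, conditions (i)--(iii) become: find many maps $(v,u) \in L^\infty(\R^2 \times \R, \R^2 \times \Sym_0^{2\times 2})$ that agree with $(\tilde{v}, \tilde{u})$ outside $\Omega$, solve the linear conservation laws $\div_x v = 0$ and $\partial_t v + \div_x u = 0$ in $\mathcal{D}'(\R^2 \times \R)$, and take values pointwise a.e.\ on $\Omega$ in
\[
K := \{(v, u) \in \R^2 \times \Sym_0^{2\times 2} : v \otimes v - u = \tfrac{C_0}{2}\id\},
\]
while the open relaxed set $U := \{(v, u) : v \otimes v - u < \tfrac{C_0}{2}\id\}$ is where the hypothesis places $(\tilde{v}, \tilde{u})$.

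Plugging the plane-wave ansatz $(v, u)(x, t) = (\bar a, \bar U) h(\xi \cdot x + \sigma t)$ with nonzero $(\xi, \sigma) \in \R^2 \times \R$ into the linear equations yields the algebraic conditions $\bar a \cdot \xi = 0$ and $\sigma \bar a + \bar U \xi = 0$; its solutions constitute the wave cone $\Lambda$. A direct computation shows that $\Lambda$ is rich enough that the $\Lambda$-convex hull of $K$ coincides with $U$ (this is precisely the content used in \cite{ChDLKr}). I then take $X_0$ to be the set of smooth maps on $\R^2 \times \R$ that coincide with $(\tilde{v}, \tilde{u})$ outside $\Omega$, satisfy the linear PDEs, and take values strictly in $U$; the weak-$*$ $L^\infty$-closure $X$ of $X_0$ is a complete metrizable space. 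The functional
\[
J(v, u) := \int_\Omega \bigl(C_0 - |v(x, t)|^2\bigr) \, \d x \, \d t
\]
is nonnegative on $X$, lower semicontinuous in the strong $L^1$ topology, and upper semicontinuous in the weak-$*$ topology, so its set of points of continuity is residual in $X$.

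The hard part will be the perturbation property: at any $(v_0, u_0) \in X_0$ with $J(v_0, u_0) > 0$, one must construct smooth compactly supported perturbations $(\underline{w}, \underline{W})$ that keep $(v_0 + \underline{w}, u_0 + \underline{W}) \in X_0$ and decrease $J$ by an amount bounded below in terms of $J(v_0, u_0)$ alone. This is accomplished by partitioning $\Omega$ into small cylinders on which $(v_0, u_0)$ is essentially constant, adding in each cylinder a high-frequency plane wave from $\Lambda$ constructed via a suitable potential, cutting off with smooth bumps, and exploiting the quantitative gain supplied by the $\Lambda$-convex hull characterization together with the distance of $(v_0, u_0)$ from $K$. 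Once this perturbation lemma is in place, the standard Baire argument forces $J(v, u) = 0$ at every continuity point, and the subsolution inequality $v \otimes v - u \le \tfrac{C_0}{2}\id$ is preserved in the weak-$*$ limit through the positivity of the defect matrix $\tfrac{C_0}{2}\id - (v \otimes v - u)$; combined with vanishing trace $C_0 - |v|^2 = 0$ this yields $(v, u) \in K$ a.e.\ on $\Omega$. Since the continuity set is an uncountable dense $G_\delta$, infinitely many such maps exist, and the family $\underline{v} := v - \tilde{v}$, $\underline{u} := u - \tilde{u}$ satisfies (i)--(iii) of the lemma.
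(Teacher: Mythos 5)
Your proposal takes the same route as the paper: the Baire-category convex-integration scheme of De Lellis and Sz\'ekelyhidi, with the space $X_0$ of smooth subsolutions, its weak-$*$ closure $X$, plane-wave perturbations along the wave cone, and a quantitative perturbation lemma exploiting the geometry of the relaxed set. The one substantive place where it diverges is a gap rather than a genuine alternative.

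The gap is that your functional $J(v,u)=\int_\Omega\bigl(C_0-\abs{v(x,t)}^2\bigr)\,\d x\,\d t$ is not finite when $\Omega$ has infinite measure, and the lemma is stated for an arbitrary open $\Omega\subset\R^2\times\R$ -- indeed, in the application $\Omega=P_1$ is an unbounded wedge, so this is the case that matters. If $(\underline{v},\underline{u})$ has compact support in $\Omega$ (or more generally decays), then $v=\tilde v$ outside a bounded set; the hypothesis $\tilde v\otimes\tilde v-\tilde u<\tfrac{C_0}{2}\id$ gives, after taking the trace, $\abs{\tilde v}^2<C_0$, so the integrand tends to the positive constant $C_0-\abs{\tilde v}^2$ at infinity and $J\equiv+\infty$ on all of $X_0$. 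Consequently $J$ cannot distinguish points of $X$, it is not Baire-1 in any useful sense, and the residuality argument collapses. The paper avoids this by not using a single global functional: it works with the family of identity maps $I_N$ from $(X,d)$ into $L^2\bigl(B_N(0)\times(-N,N)\bigr)$, shows each $I_N$ is Baire-1 so its continuity points are residual, intersects over $N$ to get a residual set on which all $I_N$ are continuous, and proves the perturbation estimate (Proposition \ref{p:inequality}) locally on $\Gamma=B_N(0)\times(-N,N)$. Replacing your single $J$ by the exhausting family $J_N(v,u)=\int_{\Omega\cap(B_N(0)\times(-N,N))}\bigl(C_0-\abs{v}^2\bigr)$ and intersecting the residual continuity sets would repair the argument and bring it in line with the paper; the rest of your sketch (wave cone, $\Lambda$-convex hull, segment construction, preservation of the subsolution inequality in the weak-$*$ limit) matches the paper's strategy.
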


Proposition \ref{p:subs} is then proved by applying Lemma \ref{l:ci} with $\Omega = P_1$, $(\tilde{v}, \tilde{u}) = (v_1,u_1)$ and $C_0 = C$. It is then a matter of easy computation to check that each couple $(\overline{\rho}, \overline{v} + \underline{v})$ is indeed an admissible weak solution to \eqref{eq:Euler system}--\eqref{eq:energy inequality} with initial data \eqref{eq:R_data}, for details see \cite[Section 3.3]{ChDLKr}.

In the rest of this section we present the main ideas of the proof of Lemma \ref{l:ci}. The whole proof can be found in \cite[Section 4]{ChDLKr}.

\begin{proof}[Lemma \ref{l:ci}]
 We define $X_0$ to be the space of 
$(\underline{v}, \underline{u})\in C^\infty_c (\Omega, \R^2\times
\Sym_0^{2\times 2})$ which satisfy (ii) and the pointwise inequality 
$(\tilde{v} + \underline{v})\otimes (\tilde{v} + \underline{v}) - (\tilde{u} + \underline{u}) < \frac{C_0}{2} \id$.
We then consider the closure of $X_0$ in the $L^\infty$ weak$^\star$ topology and denote it $X$. Since $X$ is a bounded (weakly$^\star$)
closed subset of $L^\infty$, the weak$^\star$ topology is metrizable on $X$, hence we achieve a complete metric space $(X,d)$. 
Observe that any element in $X$ satisfies (i) and (ii). We thus prove that on a residual set (in the sense of Baire category) (iii) holds.

The original idea of De Lellis and Sz\'ekelyhidi (cf. \cite{dls1}) is now to consider the identity map $I$ from $(X,d)$ to $L^\infty(\Omega,\R^2\times\Sym_0^{2\times 2})$ endowed with strong $L^2$ topology and prove that for each point of continuity of $I$ (iii) holds. However, since we consider also unbounded domains $\Omega$, for technical reasons we have to consider family of maps $I_N$, $N \in \mathbb{N}\setminus\{0\}$ as follows: to $(\underline{v}, \underline{u})$ we associate
the corresponding restrictions of these maps to $B_N (0) \times (-N, N)$. We then consider $I_N$ as a map from $(X,d)$ to 
$L^\infty (B_N (0)\times (-N, N), \R^2\times \Sym_0^{2\times 2}$) endowed with the strong
$L^2$ topology. Arguing as in \cite[Lemma 4.5]{dls1} we see that each $I_N$ is a Baire-1 map and hence, from
a classical theorem in Baire category, its points of continuity form a residual set in $X$. The set of points at which all of maps $I_N$ are continuous is therefore also a residual set in $X$. We claim now that for each point of continuity of $I_N$ (iii) holds on $B_N (0)\times (-N, N)$. Proceeding as in \cite[Lemma 4.6]{dls1} we prove this claim and therefore finish the proof of Lemma \ref{l:ci} using the following Proposition \ref{p:inequality} and a contradiction argument. 
\end{proof}

\begin{proposition}\label{p:inequality}
 If $(\underline{v}, \underline{u})\in X_0$, then  
there exists a sequence
$(v_k, u_k)\subset X_0$ converging weakly$^\star$ to $(\underline{v},\underline{u})$ for which 
\begin{equation}\label{eq:seq ineq}
\liminf_k \|\tilde{v} + v_k\|_{L^2 (\Gamma)} \geq 
\|\tilde{v} + \underline{v}\|^2_{L^2 (\Gamma)} + \beta \left(C |\Gamma| - \|\tilde{v} + 
\underline{v}\|^2_{L^2 (\Gamma)}\right)^2\, ,
\end{equation}
where $\Gamma= B_N (0)\times (-N, N)$ and $\beta$ depends only on $\Gamma$.
\end{proposition}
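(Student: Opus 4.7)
My plan is to establish \eqref{eq:seq ineq} by the standard localized convex integration scheme adapted from \cite{dls1}, followed by a Cauchy--Schwarz averaging that converts a pointwise deficit estimate into the claimed integral form. First I would identify the wave cone $\Lambda \subset \R^2 \times \Sym_0^{2\times 2}$ for the linear differential constraints in item \textit{(ii)} of Lemma \ref{l:ci}. A short Fourier computation shows that a plane wave of the form $(\bar v, \bar u)\exp(i(ct + \xi\cdot x))$ solves $\div_x \bar v = 0$ and $\de_t \bar v + \div_x \bar u = 0$ precisely when there is a nonzero $(\xi, c) \in \R^2 \times \R$ with $\bar v\cdot\xi = 0$ and $c\bar v + \bar u\,\xi = 0$. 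For any such $(\bar v, \bar u) \in \Lambda$ and any ball $B \subset \Omega$, a potential-type construction as in \cite[Section~3]{dls1} produces smooth compactly supported perturbations $(V_k, U_k)$ of the linear system supported in $B$, converging weakly-$\star$ to zero and satisfying $\|V_k\|_{L^2(B)}^2 \to \tfrac{1}{2}|\bar v|^2\,|B|$.

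Next I would prove the main geometric lemma: for every state $(v, u) \in \R^2 \times \Sym_0^{2\times 2}$ satisfying $v\otimes v - u < \tfrac{C}{2}\id$ strictly, there exists $(\bar v, \bar u) \in \Lambda$ such that $(v + \bar v)\otimes(v + \bar v) - (u + \bar u) < \tfrac{C}{2}\id$ and
\[
|\bar v|^2 \;\geq\; c_0\bigl(C - |v|^2\bigr)^2,
\]
for a constant $c_0$ depending only on $C$. The key geometric input is that $|v|^2 = \operatorname{tr}(v\otimes v - u)$ measures the trace-deficit to the boundary set $K = \{w\otimes w - z = \tfrac{C}{2}\id\}$, so a direct diagonalization argument in $\Sym_0^{2\times 2}$ yields an explicit wave-cone direction realising the quadratic gain; this is the standard $\Lambda$-convex-hull computation from convex integration.

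Finally I would combine the two ingredients: partition $\Gamma$ into small open cylinders $Q_j$ on which $(\tilde v + \underline v, \tilde u + \underline u)$ is nearly constant, apply the geometric lemma on each $Q_j$ to obtain directions $(\bar v_j, \bar u_j) \in \Lambda$, and glue the localized plane waves of the first step at sufficiently high frequency so that the perturbed pair $(v_k, u_k)$ still lies in $X_0$ and converges weakly-$\star$ to $(\underline v, \underline u)$. Orthogonality of the oscillations then gives, in the limit of shrinking $Q_j$,
\[
\liminf_k \|\tilde v + v_k\|^2_{L^2(\Gamma)} \;\geq\; \|\tilde v + \underline v\|^2_{L^2(\Gamma)} + c_0'\int_\Gamma\bigl(C - |\tilde v + \underline v|^2\bigr)^2 \d x\,\d t,
\]
and Cauchy--Schwarz on the non-negative integrand $C - |\tilde v + \underline v|^2$ produces
\[
\int_\Gamma\bigl(C - |\tilde v + \underline v|^2\bigr)^2 \d x\,\d t \;\geq\; |\Gamma|^{-1}\bigl(C|\Gamma| - \|\tilde v + \underline v\|^2_{L^2(\Gamma)}\bigr)^2,
\]
which is exactly \eqref{eq:seq ineq} with $\beta = c_0'/|\Gamma|$. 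The hard part, typical of convex integration arguments, is the geometric lemma: one must exhibit a wave-cone direction whose first component is bounded below by the trace-deficit while simultaneously keeping the perturbed symmetric matrix strictly below $\tfrac{C}{2}\id$, which requires a careful choice of the Fourier direction $(\xi, c)$ so that the largest eigenvalue of the perturbed matrix does not jump past the threshold.
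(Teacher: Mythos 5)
Your proposal follows essentially the same route as the paper's own proof: localized plane-wave perturbations along the wave cone of the linear constraints in (ii), a geometric lemma bounding the admissible segment length from below by the trace-deficit $C-|v|^2$, a gluing step over disjoint small cylinders, and a final Cauchy--Schwarz estimate converting the pointwise gain into the global $L^2$ form. The paper records exactly these two ingredients as its ``crucial observations'' (localized plane-wave solutions and the segment-length lower bound $|p|\geq c_0(C_0-a^2)$) and sketches the same gluing over balls of radius $1/k$, deferring the quantitative computation to \cite{ChDLKr}; your sketch supplies precisely those details.
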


\begin{proof}
The proof of Proposition \ref{p:inequality} is based on two crucial observations.

\begin{itemize}
 \item[(1)] First of them is the existence of the plain-wave like solutions to the system of linear PDE's (ii), i.e. compactly supported solutions taking values in an $\ep-$neighborhood of a certain line segments in $\R^2\times \Sym_0^{2\times 2}$, see \cite[Proposition 4.1]{ChDLKr}.
 \item[(2)] The second observation concerns the geometric properties of the set 
$$
\Ucal = \left\{(a,A) \in \R^2\times \Sym_0^{2\times 2}: a\otimes a - A < \frac{C_0}{2}\id\right\}.
$$
Namely it holds that for each $(a,A) \in \Ucal$ there exists a segment $\sigma = [-p,p] \subset \R^2\times \Sym_0^{2\times 2}$ such that $(a,A) + \sigma \subset \Ucal$ and $\abs{p} \geq c_0(C_0 - a^2)$ with $c_0 > 0$ being a geometric constant, see \cite[Lemma 4.3]{ChDLKr}.
\end{itemize}

Let $(\underline{v}, \underline{u})\in X_0$. Consider any point $(x_0, t_0)\in \Gamma$. It can be easily seen that $(\tilde{v}, \tilde{u}) + 
(\underline{v}, \underline{u})$ takes values in $\mathcal{U}$. Then for 
$(a, A)= (\tilde{v}, \tilde{u}) + (\underline{v} (x_0, t_0), \underline{u} (x_0, t_0))$ we find a segment $\sigma$ as in (2)
and choose $r>0$ so that $(\tilde{v}, \tilde{u}) + (\underline{v} (x,t), \underline{u} (x,t))
+ \sigma \subset \mathcal{U}$ for any $(x,t)\in B_r (x_0) \times (t_0-r, t_0+r)$. Note that there always exists such $r > 0$ since $(\underline{v},\underline{u})$ are smooth functions. Then using (1) we find a solution to (ii) and rescale it to obtain $(v_{x_0, t_0, r}, u_{x_0, t_0, r})$ supported in $B_r (x_0) \times (t_0-r, t_0+r)$. Moreover $(\underline{v}, \underline{u}) +
(v_{x_0, t_0, r}, u_{x_0, t_0, r})\in X_0$ provided $\varepsilon$ in (1) is taken sufficiently small.

The sequence $(v_k,u_k)$ is then constructed in the following way. For $k > 0$ we consider finite number of points $(x_j,t_j) \in \Gamma$ such that the sets $B_r (x_j) \times (t_j-r, t_j+r)$ with $r = \frac{1}{k}$ are pairwise disjoint and define
\[
(v_k, u_k):= (\underline{v}, \underline{u}) + \sum_j (v_{x_j, t_j, r}, u_{x_j, t_j, r}) \, .
\]
As is proved in detail in \cite[Section 4.1]{ChDLKr}, this construction can be done in such a way that \eqref{eq:seq ineq} holds.
\end{proof}

\section{Proof of Theorem \ref{t:main0}}\label{s:Ex}

Theorem \ref{t:main0} is here proved using Proposition \ref{p:subs}, i.e. showing the existence of a fan admissible subsolution with appropriate initial data.

First we recall the set of identities and inequalities which defines the fan admissible subsolution with initial data \eqref{eq:R_data}, see also \cite[Section 5]{ChDLKr}.

We introduce the real numbers 
$\alpha, \beta, \gamma_1, \gamma_2, v_{-1}, v_{-2}, v_{+1}, v_{+2}$ such that
\begin{align} 
v_1 &= (\alpha, \beta),\label{eq:v1}\\
v_- &= (v_{-1}, v_{-2})\\
v_+ &= (v_{+1}, v_{+2})\\
u_1 &=\left( \begin{array}{cc}
    \gamma_1 & \gamma_2 \\
    \gamma_2 & -\gamma_1\\
    \end{array} \right)\, .\label{eq:u1}
\end{align}

Then, Proposition \ref{p:subs} translates into the following set of algebraic identities and inequalities.

\begin{proposition}\label{p:algebra}
Let $P_-, P_1, P_+$ be a fan partition as in Definition \ref{d:fan}. The constants 
$v_1, v_-, v_+, u_1, \rho_-, \rho_+, \rho_1$ as in \eqref{eq:v1}-\eqref{eq:u1} define an admissible
fan subsolution as in Definitions \ref{d:subs}-\ref{d:admiss} if and only if the following
identities and inequalities hold:
\begin{itemize}
\item Rankine-Hugoniot conditions on the left interface:
\begin{align}
&\nu_- (\rho_- - \rho_1) \, =\,  \rho_- v_{-2} -\rho_1  \beta \label{eq:cont_left}  \\
&\nu_- (\rho_- v_{-1}- \rho_1 \alpha) \, = \, \rho_- v_{-1} v_{-2}- \rho_1 \gamma_2  \label{eq:mom_1_left}\\
&\nu_- (\rho_- v_{-2}- \rho_1 \beta) \, = \,  
\rho_- v_{-2}^2 + \rho_1 \gamma_1 +p (\rho_-)-p (\rho_1) - \rho_1 \frac{C}{2}\, ;\label{eq:mom_2_left}
\end{align}
\item Rankine-Hugoniot conditions on the right interface:
\begin{align}
&\nu_+ (\rho_1-\rho_+ ) \, =\,  \rho_1  \beta - \rho_+ v_{+2} \label{eq:cont_right}\\
&\nu_+ (\rho_1 \alpha- \rho_+ v_{+1}) \, = \, \rho_1 \gamma_2 - \rho_+ v_{+1} v_{+2} \label{eq:mom_1_right}\\
&\nu_+ (\rho_1 \beta- \rho_+ v_{+2}) \, = \, - \rho_1 \gamma_1 - \rho_+ v_{+2}^2 +p (\rho_1) -p (\rho_+) 
+ \rho_1 \frac{C}{2}\, ;\label{eq:mom_2_right}
\end{align}
\item Subsolution condition:
\begin{align}
 &\alpha^2 +\beta^2 < C \label{eq:sub_trace}\\
& \left( \frac{C}{2} -{\alpha}^2 +\gamma_1 \right) \left( \frac{C}{2} -{\beta}^2 -\gamma_1 \right) - 
\left( \gamma_2 - \alpha \beta \right)^2 >0\, ;\label{eq:sub_det}
\end{align}
\item Admissibility condition on the left interface:
\begin{align}
& \nu_-(\rho_- \varepsilon(\rho_-)- \rho_1 \varepsilon( \rho_1))+\nu_- 
\left(\rho_- \frac{\abs{v_-}^2}{2}- \rho_1 \frac{C}{2}\right)\nonumber\\
\leq & \left[(\rho_- \varepsilon(\rho_-)+ p(\rho_-)) v_{-2}- 
( \rho_1 \varepsilon( \rho_1)+ p(\rho_1)) \beta \right] 
+ \left( \rho_- v_{-2} \frac{\abs{v_-}^2}{2}- \rho_1 \beta \frac{C}{2}\right)\, ;\label{eq:E_left}
\end{align}
\item Admissibility condition on the right interface:
\begin{align}
&\nu_+(\rho_1 \varepsilon( \rho_1)- \rho_+ \varepsilon(\rho_+))+\nu_+ 
\left( \rho_1 \frac{C}{2}- \rho_+ \frac{\abs{v_+}^2}{2}\right)\nonumber\\
\leq &\left[ ( \rho_1 \varepsilon( \rho_1)+ p(\rho_1)) \beta- (\rho_+ \varepsilon(\rho_+)+ p(\rho_+)) v_{+2}\right] 
+ \left( \rho_1 \beta \frac{C}{2}- \rho_+ v_{+2} \frac{\abs{v_+}^2}{2}\right)\, .\label{eq:E_right}
\end{align}
\end{itemize}
\end{proposition}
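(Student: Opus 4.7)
The plan is straightforward. Because $(\overline{\rho},\overline{v},\overline{u})$ is piecewise constant on the three wedges $P_-, P_1, P_+$ of the fan partition, every distributional equation or inequality from Definitions~\ref{d:subs}--\ref{d:admiss} reduces to a Dirac mass supported on the two hyperplanes $\{x_2=\nu_\pm t\}$. Each such mass carries as coefficient a Rankine--Hugoniot-type jump in the $x_2$ direction, so the entire system of distributional conditions is equivalent to a finite collection of algebraic identities (for \eqref{eq:continuity}--\eqref{eq:momentum}) and inequalities (for \eqref{eq:admissible subsolution}), two for each scalar equation.

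I would first derive \eqref{eq:cont_left}--\eqref{eq:mom_2_right}. The key observation is that on $P_\pm$ the identity $u_\pm = v_\pm\otimes v_\pm - \frac{1}{2}|v_\pm|^2\id$, combined with the indicator-weighted term $\frac{1}{2}\overline{\rho}|\overline{v}|^2 \mathbf{1}_{P_+\cup P_-}\id$ in \eqref{eq:momentum}, restores the usual Euler momentum flux $\rho_\pm v_\pm\otimes v_\pm + p(\rho_\pm)\id$, whereas on $P_1$ the effective flux is $\rho_1 u_1 + \bigl(p(\rho_1)+\tfrac{C\rho_1}{2}\bigr)\id$. Writing the jump of $\overline{\rho}$, of $\overline{\rho}\overline{v}$, and of these momentum fluxes across $\{x_2=\nu_- t\}$, and reading off each component using \eqref{eq:v1}--\eqref{eq:u1}, yields \eqref{eq:cont_left}--\eqref{eq:mom_2_left}. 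The symmetric computation on $\{x_2=\nu_+ t\}$ produces \eqref{eq:cont_right}--\eqref{eq:mom_2_right}.

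Next I would treat the pointwise subsolution condition \eqref{eq:subsolution 2}. Expanding $\frac{C}{2}\id-(v_1\otimes v_1-u_1)$ with \eqref{eq:v1}--\eqref{eq:u1} gives the symmetric matrix whose diagonal entries are $\frac{C}{2}-\alpha^2+\gamma_1$ and $\frac{C}{2}-\beta^2-\gamma_1$ and whose off-diagonal entry is $\gamma_2-\alpha\beta$. Positive definiteness of a symmetric $2\times 2$ matrix is equivalent to positivity of its trace together with positivity of its determinant; the $\gamma_1$ terms in the trace cancel and one recovers exactly \eqref{eq:sub_trace}, while the determinant condition is \eqref{eq:sub_det} verbatim.

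Finally, \eqref{eq:E_left}--\eqref{eq:E_right} come from applying the same jump calculus to \eqref{eq:admissible subsolution}, viewed now as a distribution supported on the two interfaces whose coefficient on each hyperplane must be $\leq 0$. This is the step I expect to be the most delicate, since one has to correctly combine three groups of terms of differing support: the internal-energy part $\overline{\rho}\varepsilon(\overline{\rho})$, which is active everywhere, the kinetic-energy part carrying $\mathbf{1}_{P_+\cup P_-}$, and the $\frac{C}{2}\rho_1$ part carrying $\mathbf{1}_{P_1}$. Grouping the contributions of each interface so that the density-energy jump multiplied by $\nu_\pm$ appears on the left and the total energy flux jump appears on the right produces precisely \eqref{eq:E_left} on $\{x_2=\nu_- t\}$ and \eqref{eq:E_right} on $\{x_2=\nu_+ t\}$. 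Since every implication above is actually an equivalence (a distribution supported on a hyperplane with nonzero density jump vanishes iff its coefficient vanishes, and analogously for the one-sided inequality), the two directions of the proposition follow simultaneously.
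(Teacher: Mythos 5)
Your proposal is correct and follows essentially the same route as the paper's (which in turn refers to Section~5 of \cite{ChDLKr}): read the distributional system on the fan partition as jump conditions across the two interfaces $\{x_2=\nu_\pm t\}$, observe that on $P_\pm$ the identity $u_\pm = v_\pm\otimes v_\pm - \tfrac12|v_\pm|^2\id$ together with the kinetic-energy indicator term reconstructs the usual Euler momentum flux, while on $P_1$ the flux is $\rho_1 u_1 + \bigl(p(\rho_1)+\tfrac{C\rho_1}{2}\bigr)\id$, and convert the matrix inequality \eqref{eq:subsolution 2} into trace and determinant positivity after computing
\[
\tfrac{C}{2}\id - \bigl(v_1\otimes v_1 - u_1\bigr)
= \begin{pmatrix} \tfrac{C}{2}-\alpha^2+\gamma_1 & \gamma_2-\alpha\beta \\ \gamma_2-\alpha\beta & \tfrac{C}{2}-\beta^2-\gamma_1 \end{pmatrix}.
\]
All steps are reversible, so the equivalence follows. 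No gaps.
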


Our aim is now to show the solvability of the previous identities and inequalities.

The following easy observation simplifies the set of algebraic identities and inequalities a little bit.
\begin{lemma}\label{l:algebra simplified}
Let $v_{-1} = v_{+1}$. Then $\alpha = v_{-1} = v_{+1}$ and $\gamma_2 = \alpha\beta$.
\end{lemma}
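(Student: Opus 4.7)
The plan is to derive both conclusions directly from the Rankine-Hugoniot conditions on the two interfaces, combining the continuity equations \eqref{eq:cont_left}, \eqref{eq:cont_right} with the first-component momentum equations \eqref{eq:mom_1_left}, \eqref{eq:mom_1_right}. The key observation is that the first component of velocity enters the system in a rather trivial way (this mirrors the decoupling described in Section \ref{s:SS}), so the identities should drop out with no inequalities or thermodynamic information needed.

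Concretely, I would set $V := v_{-1} = v_{+1}$ and multiply \eqref{eq:cont_left} by $V$ to obtain
\[
\nu_- V (\rho_- - \rho_1) = V(\rho_- v_{-2} - \rho_1 \beta).
\]
Subtracting this from \eqref{eq:mom_1_left} (where the left-hand side reads $\nu_-(\rho_- V - \rho_1 \alpha)$ after substituting $v_{-1}=V$) cancels the $\rho_-$ terms and yields
\[
\nu_- \rho_1 (V - \alpha) = \rho_1 (V\beta - \gamma_2),
\]
i.e.\ $\nu_-(V-\alpha) = V\beta-\gamma_2$ since $\rho_1>0$. Performing the analogous manipulation on the right interface, multiplying \eqref{eq:cont_right} by $V$ and subtracting from \eqref{eq:mom_1_right}, I obtain $\nu_+(V-\alpha)=V\beta-\gamma_2$ as well.

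Combining the two relations gives $(\nu_+ - \nu_-)(V-\alpha) = 0$. Since the fan partition requires $\nu_- < \nu_+$, this forces $\alpha = V = v_{-1} = v_{+1}$. Feeding this back into either of the two derived identities yields $\gamma_2 = V\beta = \alpha\beta$, which is the second claim.

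I do not expect a serious obstacle here; the computation is purely algebraic and relies only on $\rho_1>0$ and $\nu_-\neq\nu_+$, both of which are built into Definitions \ref{d:fan} and \ref{d:subs}. The only point of care is to make sure the right combination of equations is subtracted so that the $\rho_\pm$ terms cancel and leave only coefficients proportional to $\rho_1$.
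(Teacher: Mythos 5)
Your proof is correct and takes essentially the same route as the paper: combine the continuity equation with the first-component momentum equation at each interface, use $\rho_1>0$ and $\nu_-\neq\nu_+$ to force $\alpha = V$, and then read off $\gamma_2=\alpha\beta$. The only cosmetic difference is that you multiply the continuity equations by $V=v_{-1}=v_{+1}$ whereas the paper multiplies by $\alpha$; both eliminations are equivalent.
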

\begin{proof}
Multiplying \eqref{eq:cont_left} by $\alpha$ and subtracting \eqref{eq:mom_1_left} we achieve
\begin{equation}\label{eq:blabla1}
 \rho_1(\alpha\beta-\gamma_2) = (\alpha-v_{-1})\rho_-(v_{-2}-\nu_-) = (\alpha-v_{-1})\rho_1(\beta-\nu_-).
\end{equation}
Similarly multiplying \eqref{eq:cont_right} by $\alpha$ and subtracting \eqref{eq:mom_1_right} we achieve
\begin{equation}\label{eq:blabla2}
 \rho_1(\alpha\beta-\gamma_2) = (\alpha-v_{+1})\rho_+(v_{+2}-\nu_+) = (\alpha-v_{+1})\rho_1(\beta-\nu_+).
\end{equation}
Comparing \eqref{eq:blabla1} and \eqref{eq:blabla2} and using the assumption $v_{-1} = v_{+1}$ we get 
\begin{equation}
 (\alpha-v_{-1})\rho_1(\beta-\nu_-) = (\alpha-v_{-1})\rho_1(\beta-\nu_+).
\end{equation}
Since $\nu_- < \nu_+$ by definition and $\rho_1 > 0$ we conclude that $\alpha = v_{-1} = v_{+1}$ and consequently also $\gamma_2 = \alpha\beta$.
\end{proof}

Thus, assuming we have Riemann data \eqref{eq:R_data} such that $v_{-1} = v_{+1}$ we can simplify the set of identities and inequalities as follows:
\begin{itemize}
\item Rankine-Hugoniot conditions on the left interface:
\begin{align}
&\nu_- (\rho_- - \rho_1) \, =\,  \rho_- v_{-2} -\rho_1  \beta \label{eq:cont_left s}  \\
&\nu_- (\rho_- v_{-2}- \rho_1 \beta) \, = \,  
\rho_- v_{-2}^2 - \rho_1(\frac{C}{2} - \gamma_1) +p (\rho_-)-p (\rho_1) \, ;\label{eq:mom_2_left s}
\end{align}
\item Rankine-Hugoniot conditions on the right interface:
\begin{align}
&\nu_+ (\rho_1-\rho_+ ) \, =\,  \rho_1  \beta - \rho_+ v_{+2} \label{eq:cont_right s}\\
&\nu_+ (\rho_1 \beta- \rho_+ v_{+2}) \, = \, \rho_1 (\frac{C}{2} - \gamma_1) - \rho_+ v_{+2}^2 +p (\rho_1) -p (\rho_+) 
\, ;\label{eq:mom_2_right s}
\end{align}
\item Subsolution condition:
\begin{align}
 &\alpha^2 +\beta^2 < C \label{eq:sub_trace s}\\
& \left( \frac{C}{2} -{\alpha}^2 +\gamma_1 \right) \left( \frac{C}{2} -{\beta}^2 -\gamma_1 \right) >0\, ;\label{eq:sub_det s}
\end{align}
\end{itemize}
with admissibility conditions \eqref{eq:E_left} and \eqref{eq:E_right} same as above and $\alpha = v_{-1} = v_{+1}$. Investigating further we make the following observation
\begin{lemma}\label{l:algebra simplified 2}
 A necessary condition for \eqref{eq:sub_trace s}-\eqref{eq:sub_det s} to be satisfied is $\frac{C}{2} -\gamma_1 > \beta^2$. 
\end{lemma}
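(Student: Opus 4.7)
The plan is to argue by a simple sign analysis of the two factors appearing in \eqref{eq:sub_det s}, combined with the trace-type constraint \eqref{eq:sub_trace s}. Since $\beta^2 < \tfrac{C}{2} - \gamma_1$ is precisely the positivity of the second factor $\tfrac{C}{2} - \beta^2 - \gamma_1$, the claim amounts to ruling out the possibility that both factors in \eqref{eq:sub_det s} are simultaneously negative.

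More concretely, I would proceed as follows. First, by \eqref{eq:sub_det s} the product
\[
\left(\tfrac{C}{2}-\alpha^2+\gamma_1\right)\left(\tfrac{C}{2}-\beta^2-\gamma_1\right)
\]
is strictly positive, so neither factor vanishes and the two factors share the same sign. It therefore suffices to exclude the case in which both are strictly negative. Assume for contradiction that
\[
\tfrac{C}{2}-\alpha^2+\gamma_1<0 \qquad \text{and} \qquad \tfrac{C}{2}-\beta^2-\gamma_1<0.
\]
Adding these two inequalities, the $\gamma_1$ terms cancel and we obtain $C-\alpha^2-\beta^2<0$, i.e.\ $\alpha^2+\beta^2>C$, which directly contradicts \eqref{eq:sub_trace s}.

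Hence the only compatible sign configuration is that both factors in \eqref{eq:sub_det s} are strictly positive; in particular $\tfrac{C}{2}-\beta^2-\gamma_1>0$, which is exactly the desired inequality $\tfrac{C}{2}-\gamma_1>\beta^2$. There is really no serious obstacle here: the argument is purely algebraic and does not use the Rankine--Hugoniot equations \eqref{eq:cont_left s}--\eqref{eq:mom_2_right s} or the admissibility inequalities \eqref{eq:E_left}--\eqref{eq:E_right}, only the two subsolution conditions; the only point worth emphasizing in the write-up is that the strict inequalities in \eqref{eq:sub_trace s}--\eqref{eq:sub_det s} are what upgrade the conclusion to a strict inequality $\tfrac{C}{2}-\gamma_1>\beta^2$ (and in particular rule out the degenerate case where one of the factors equals zero).
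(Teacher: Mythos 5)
Your proof is correct and takes essentially the same approach as the paper: both arguments rule out the configuration in which the factor $\tfrac{C}{2}-\gamma_1-\beta^2$ is nonpositive by showing that, via \eqref{eq:sub_trace s}, this would force the other factor to have the opposite sign, contradicting the strict positivity of the product in \eqref{eq:sub_det s}. The paper phrases it as assuming $\tfrac{C}{2}-\gamma_1<\beta^2$ and deriving the contradiction through the chain $C>\alpha^2+\beta^2>\alpha^2+\tfrac{C}{2}-\gamma_1$, whereas you add the two assumed-negative factors; these are the same algebraic observation.
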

\begin{proof}
We rewrite \eqref{eq:sub_det s} as 
\begin{equation}
 \left(C - {\alpha}^2 -\left(\frac{C}{2} - \gamma_1\right) \right) \left( \frac{C}{2} -\gamma_1 - {\beta}^2 \right) >0.
\end{equation}
Let us assume that $\frac{C}{2} -\gamma_1 < \beta^2$. Then it has to hold 
\begin{equation}\label{eq:blabla3}
 \left(C - {\alpha}^2 -\left(\frac{C}{2} - \gamma_1\right) \right) < 0,
\end{equation}
however using \eqref{eq:sub_trace s} we get $C > \alpha^2 + \beta^2 > \alpha^2 + \frac{C}{2} - \gamma_1$ which contradicts \eqref{eq:blabla3}.
\end{proof}

This motivates us to introduce new unknowns $0 < \ep_1 = \frac{C}{2} - \gamma_1 - \beta^2$ and $0 < \ep_2 = C-\alpha^2-\beta^2-\ep_1$. We also rewrite the admissibility inequalities as described in the following Lemma.
\begin{lemma}
 In the case $v_{-1} = v_{+1} = \alpha$ and with notation $\ep_1$, $\ep_2$ introduced above, the set of algebraic identities and inequalities \eqref{eq:cont_left s}-\eqref{eq:sub_det s} together with \eqref{eq:E_left}-\eqref{eq:E_right} is equivalent to 
\begin{itemize}
\item Rankine-Hugoniot conditions on the left interface:
\begin{align}
&\nu_- (\rho_- - \rho_1) \, =\,  \rho_- v_{-2} -\rho_1  \beta \label{eq:cont_left ss}  \\
&\nu_- (\rho_- v_{-2}- \rho_1 \beta) \, = \,  
\rho_- v_{-2}^2 - \rho_1(\beta^2 + \ep_1) +p (\rho_-)-p (\rho_1) \, ;\label{eq:mom_2_left ss}
\end{align}
\item Rankine-Hugoniot conditions on the right interface:
\begin{align}
&\nu_+ (\rho_1-\rho_+ ) \, =\,  \rho_1  \beta - \rho_+ v_{+2} \label{eq:cont_right ss}\\
&\nu_+ (\rho_1 \beta- \rho_+ v_{+2}) \, = \, \rho_1 (\beta^2 + \ep_1) - \rho_+ v_{+2}^2 +p (\rho_1) -p (\rho_+) 
\, ;\label{eq:mom_2_right ss}
\end{align}
\item Subsolution condition:
\begin{align}
& \ep_1 > 0 \label{eq:sub_1 ss}\\
& \ep_2 > 0\, ;\label{eq:sub_2 ss}
\end{align}
\item Admissibility condition on the left interface:
\begin{align}
&(\beta-v_{-2})\left(p(\rho_-)+p(\rho_1)-2\rho_-\rho_1\frac{\ep(\rho_-)-\ep(\rho_1)}{\rho_--\rho_1}\right) \nonumber\\
\leq &\ep_1\rho_1(v_{-2}+\beta) - (\ep_1+\ep_2)\frac{\rho_-\rho_1(\beta-v_{-2})}{\rho_--\rho_1}\, ;\label{eq:E_left ss}
\end{align}
\item Admissibility condition on the right interface:
\begin{align}
&(v_{+2}-\beta)\left(p(\rho_1)+p(\rho_+)-2\rho_1\rho_+\frac{\ep(\rho_1)-\ep(\rho_+)}{\rho_1-\rho_+}\right) \nonumber\\
\leq &-\ep_1\rho_1(v_{+2}+\beta) + (\ep_1+\ep_2)\frac{\rho_1\rho_+(v_{+2}-\beta)}{\rho_1-\rho_+}\, .\label{eq:E_right ss}
\end{align}
\end{itemize}
\end{lemma}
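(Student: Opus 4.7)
The plan is to verify the claimed equivalence componentwise, using the invertible substitution $\ep_1 = \frac{C}{2} - \gamma_1 - \beta^2$ and $\ep_2 = C - \alpha^2 - \beta^2 - \ep_1$. The continuity Rankine-Hugoniot conditions \eqref{eq:cont_left s}, \eqref{eq:cont_right s} coincide literally with \eqref{eq:cont_left ss}, \eqref{eq:cont_right ss}, while the momentum conditions \eqref{eq:mom_2_left ss}, \eqref{eq:mom_2_right ss} follow from \eqref{eq:mom_2_left s}, \eqref{eq:mom_2_right s} by substituting $\frac{C}{2} - \gamma_1 = \beta^2 + \ep_1$.

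For the subsolution conditions, I would compute the two identities $\frac{C}{2} - \beta^2 - \gamma_1 = \ep_1$ and $\frac{C}{2} - \alpha^2 + \gamma_1 = C - \alpha^2 - \beta^2 - \ep_1 = \ep_2$. These turn the product in \eqref{eq:sub_det s} into $\ep_1\ep_2$ and recast \eqref{eq:sub_trace s} as $\ep_1 + \ep_2 > 0$. Since positivity of both the sum and the product of two real numbers is equivalent to both numbers being positive, this gives exactly \eqref{eq:sub_1 ss}--\eqref{eq:sub_2 ss}, and the converse direction is immediate.

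The bulk of the work, and the main obstacle, is recasting the admissibility inequalities. I would treat \eqref{eq:E_left} in detail, the right interface being symmetric. Expanding $|v_-|^2 = \alpha^2 + v_{-2}^2$ and $\frac{C}{2} = \frac{\alpha^2+\beta^2+\ep_1+\ep_2}{2}$, all contributions proportional to $\alpha^2$ collect into a single factor $\frac{\alpha^2}{2}\bigl[\nu_-(\rho_- - \rho_1) - (\rho_- v_{-2} - \rho_1\beta)\bigr]$, which vanishes by \eqref{eq:cont_left ss}. What remains depends only on $v_{-2}$, $\beta$, $\rho_-$, $\rho_1$ and the combinations $\ep_1$, $\ep_1 + \ep_2$. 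Using $\nu_- = (\rho_- v_{-2} - \rho_1\beta)/(\rho_- - \rho_1)$ and repeating the algebraic chain of Section \ref{s:SS} that produced \eqref{eq:admiss simplified}, in which the role of \eqref{eq:useful} is now played by the modified Hugoniot identity
\[
\rho_-\rho_1(v_{-2} - \beta)^2 = (\rho_- - \rho_1)\bigl[p(\rho_-) - p(\rho_1) - \rho_1\ep_1\bigr],
\]
obtained by eliminating $\nu_-$ between \eqref{eq:cont_left ss} and \eqref{eq:mom_2_left ss}, the $\ep$-independent part of the chain assembles into the left-hand side of \eqref{eq:E_left ss}. The $\rho_1\ep_1$ contribution from the modified Hugoniot, together with the $\ep_1+\ep_2$ correction (which, after using \eqref{eq:cont_left ss} once more, takes the form $\rho_1(\beta - \nu_-)(\ep_1+\ep_2) = \rho_-\rho_1(\beta - v_{-2})(\ep_1+\ep_2)/(\rho_- - \rho_1)$ up to a factor absorbed by the final multiplication by $2$), collects into exactly $\ep_1\rho_1(v_{-2}+\beta) - (\ep_1+\ep_2)\rho_-\rho_1(\beta-v_{-2})/(\rho_--\rho_1)$ on the right-hand side of \eqref{eq:E_left ss}. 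The tedious part is tracking signs and the factor of $2$ introduced in the last step; a useful sanity check is that setting $\ep_1 = \ep_2 = 0$ collapses the inequality back to the classical shock admissibility \eqref{eq:admiss simplified} for the states $(\rho_-, v_{-2})$ and $(\rho_1, \beta)$, as should be the case since the subsolution then degenerates to a genuine shock.
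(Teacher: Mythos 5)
Your proposal is correct and follows essentially the same route as the paper: the RH and subsolution transformations are by direct substitution, and for the admissibility inequalities you subtract $\frac{\alpha^2}{2}$ times the continuity RH condition, substitute $\nu_-=(\rho_-v_{-2}-\rho_1\beta)/(\rho_--\rho_1)$, and invoke exactly the modified Hugoniot identity $\rho_-\rho_1(\beta-v_{-2})^2=(\rho_--\rho_1)\bigl[p(\rho_-)-p(\rho_1)-\rho_1\ep_1\bigr]$ used in the paper, followed by a final multiplication by $2$. One small point where your argument is tidier than the paper's: you establish the equivalence of the subsolution conditions with $\ep_1>0$, $\ep_2>0$ in one stroke via the observation that \eqref{eq:sub_trace s} and \eqref{eq:sub_det s} become $\ep_1+\ep_2>0$ and $\ep_1\ep_2>0$, whereas the paper first proves a necessary-condition lemma and then introduces the new unknowns.
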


\begin{proof}
 The only nontrivial part of the proof is the reformulation of the admissibility conditions. We show the procedure on the admissibility condition on the left interface, the condition on the right interface is achieved in the same way. First we observe that we can subtract from \eqref{eq:E_left} the identity \eqref{eq:cont_left ss} multiplied by $\frac{\alpha^2}{2}$. This way we obtain
\begin{align}
& \nu_-(\rho_- \varepsilon(\rho_-)- \rho_1 \varepsilon( \rho_1))+\nu_- 
\left(\rho_- \frac{v_{-2}^2}{2} - \rho_1 \frac{\beta^2+\ep_1+\ep_2}{2}\right)\nonumber\\
\leq & \left[(\rho_- \varepsilon(\rho_-)+ p(\rho_-)) v_{-2} - 
( \rho_1 \varepsilon( \rho_1)+ p(\rho_1)) \beta \right] 
+ \left( \rho_- \frac{v_{-2}^3}{2}- \rho_1 \beta \frac{\beta^2+\ep_1+\ep_2}{2}\right)\, .\label{eq:blbl1}
\end{align}
Using \eqref{eq:cont_left ss} we get $\nu_- = \frac{\rho_-v_{-2}-\rho_1\beta}{\rho_--\rho_1}$ and we plug this to the left hand side of \eqref{eq:blbl1}, while we multiply the right hand side by $\frac{\rho_--\rho_1}{\rho_--\rho_1}$ to get\footnote{Note that $\rho_- \neq \rho_1$, otherwise there is no solution to the studied system of identities and inequalities.}
\begin{align}
& \frac{\rho_1\rho_-}{\rho_--\rho_1}(\ep(\rho_1)-\ep(\rho_-))(\beta-v_{-2}) \nonumber\\ 
\leq & (p(\rho_-)v_{-2}-p(\rho_1)\beta) - \frac{\rho_1\rho_-}{2(\rho_--\rho_1)}\left((\beta-v_{-2})^2(\beta+v_{-2})+ (\beta-v_{-2})(\ep_1+\ep_2)\right)
\, .\label{eq:blbl2}
\end{align}
Combining \eqref{eq:cont_left ss} and \eqref{eq:mom_2_left ss} we get the following useful identity
\begin{equation}\label{eq:useful ss}
 (\rho_--\rho_1)(p(\rho_-)-p(\rho_1)) = \rho_-\rho_1(\beta-v_{-2})^2 + \ep_1\rho_1(\rho_--\rho_1).
\end{equation}
Finally inserting \eqref{eq:useful ss} into \eqref{eq:blbl2} we achieve the desired inequality \eqref{eq:E_left ss}.
\end{proof}

Note that according to Lemma \ref{l:admiss function} the expressions 
\begin{align}
 \left(p(\rho_-)+p(\rho_1)-2\rho_-\rho_1\frac{\ep(\rho_-)-\ep(\rho_1)}{\rho_--\rho_1}\right) \\
 \left(p(\rho_1)+p(\rho_+)-2\rho_1\rho_+\frac{\ep(\rho_1)-\ep(\rho_+)}{\rho_1-\rho_+}\right)
\end{align}
appearing on the left hand sides of \eqref{eq:E_left ss} and \eqref{eq:E_right ss} are both positive for $p(\rho) = \rho^\gamma$ with $\gamma \geq 1$.

For given data $\rho_\pm, v_{\pm 2}$ the system of relations \eqref{eq:cont_left ss}--\eqref{eq:E_right ss} consists of 4 equations and 4 inequalities for 6 unknowns $\nu_\pm, \rho_1, \beta, \ep_1, \ep_2$. Moreover $\ep_2$ appears only in the inequalities. Therefore we choose $\rho_1$ as a parameter and using the identities \eqref{eq:cont_left ss}--\eqref{eq:mom_2_right ss} we express $\nu_\pm, \beta$ and $\ep_1$ in terms of initial data and parameter $\rho_1$.

For simplicity we use the following notation for functions of initial data:
\begin{align}
 R &:= \rho_--\rho_+ \label{eq:R}\\
 A &:= \rho_-v_{-2}-\rho_+v_{+2} \label{eq:A}\\
 H &:= \rho_-v_{-2}^2-\rho_+v_{+2}^2 + p(\rho_-) - p(\rho_+). \label{eq:H}
\end{align}

Summing \eqref{eq:cont_left ss} and \eqref{eq:cont_right ss} we achieve
\begin{equation}\label{eq:num 1}
 \nu_- = \frac{A - \nu_+(\rho_1-\rho_+)}{\rho_--\rho_1}.
\end{equation}
Summing \eqref{eq:mom_2_left ss} and \eqref{eq:mom_2_right ss} we get
\begin{equation}\label{eq:blabla11}
 \nu_-^2(\rho_--\rho_1) + \nu_+^2(\rho_1-\rho_+) = H.
\end{equation}

Here we have to distinguish two cases. First let $R \neq 0$. Then \eqref{eq:blabla11} together with \eqref{eq:num 1} leads to two possible values of $\nu_+$:
\begin{equation}\label{eq:blabla12}
\nu_+ = \frac{A}{R} \pm \frac{1}{R}\sqrt{(A^2 - RH)\frac{\rho_1-\rho_-}{\rho_1-\rho_+}}.
\end{equation}
The proper sign is chosen in such a way that $\nu_- < \nu_+$. Observe that denoting $B:= A^2-RH $ and $u := v_{+2}-v_{-2}$ we have
\begin{equation}
B = \rho_-\rho_+u^2 - (\rho_--\rho_+)(p(\rho_-)-p(\rho_+)) 
\end{equation}
and thus the condition \eqref{eq:2shocks condition} implies that $B > 0$. According to Lemma \ref{l:Riemann selfsimilar solutions} the self-similar solution with this initial data consists of an admissible $1-$shock and an admissible $3-$shock with the density of the intermediate state $\rho_m > \max\{\rho_-,\rho_+\}$. This motivates us to try to find an admissible subsolution parametrized by $\rho_1 > \max\{\rho_-,\rho_+\}$. For this choice of $\rho_1$ we have
\begin{align}
 \nu_- = \frac{A}{R} - \frac{\sqrt{B}}{R}\sqrt{\frac{\rho_1-\rho_+}{\rho_1-\rho_-}} \label{eq:num}\\
 \nu_+ = \frac{A}{R} - \frac{\sqrt{B}}{R}\sqrt{\frac{\rho_1-\rho_-}{\rho_1-\rho_+}} \label{eq:nup}.
\end{align}
From \eqref{eq:cont_left ss} we can express $\beta$, although we will not need this expression in the future:
\begin{equation}\label{eq:beta}
 \beta = \frac{\rho_-v_{-2}}{\rho_1} - \frac{(\rho_--\rho_1)A}{R\rho_1} - \frac{\sqrt{B}}{R\rho_1}\sqrt{(\rho_1-\rho_-)(\rho_1-\rho_+)}.
\end{equation}
In the case $R > 0$, we use \eqref{eq:cont_left ss} and \eqref{eq:mom_2_left ss} to express $\ep_1$:
\begin{equation}\label{eq:ep1}
 \ep_1 = \left(\frac{\rho_+u}{R} + \frac{\sqrt{B}}{R}\sqrt{\frac{\rho_1-\rho_+}{\rho_1-\rho_-}}\right)^2\frac{\rho_-(\rho_1-\rho_-)}{\rho_1^2} - \frac{p(\rho_1)-p(\rho_-)}{\rho_1},
\end{equation}
while in the case $R < 0$, we rather use \eqref{eq:cont_right ss} and \eqref{eq:mom_2_right ss} to get
\begin{equation}\label{eq:ep10}
 \ep_1 = \left(\frac{\rho_-u}{R} + \frac{\sqrt{B}}{R}\sqrt{\frac{\rho_1-\rho_-}{\rho_1-\rho_+}}\right)^2\frac{\rho_+(\rho_1-\rho_+)}{\rho_1^2} - \frac{p(\rho_1)-p(\rho_+)}{\rho_1}.
\end{equation}

Now let $R = 0$, i.e. $\rho_-=\rho_+$. In this case similar procedure yields
\begin{align}
 \nu_- &= \frac{v_{-2}+v_{+2}}{2} - \frac{\rho_-\abs{u}}{2(\rho_1-\rho_-)} \label{eq:num2}\\
 \nu_+ &= \frac{v_{-2}+v_{+2}}{2} + \frac{\rho_-\abs{u}}{2(\rho_1-\rho_-)} \label{eq:nup2}\\
 \beta &= \frac{v_{-2}+v_{+2}}{2} \label{eq:beta2}\\
 \ep_1 &= \frac{\rho_-u^2}{4(\rho_1-\rho_-)} - \frac{p(\rho_1)-p(\rho_-)}{\rho_1} \label{eq:ep12}.
\end{align}

\begin{lemma}\label{l:ep1 decreasing}
 There exists a unique $\rho_{max} = \rho_{max}(\rho_-,\rho_+,u)$ such that 
 \begin{align}
  &\ep_1 > 0 \qquad \text{ for } \quad \rho_1 \in (\max\{\rho_-,\rho_+\},\rho_{max}) \\
  &\ep_1 < 0 \qquad \text{ for } \quad \rho_1 \in (\rho_{max},+\infty).
 \end{align}
  Moreover $\rho_{max} = \rho_m$, where $\rho_m$ is the density of the intermediate state of the self-similar solution emanating from initial data $(\rho_-,\rho_-v_{-2})$ on the left and $(\rho_+,\rho_+v_{+2})$ on the right given by Lemma \ref{l:Riemann selfsimilar solutions}. For fixed $\rho_-,\rho_+$ the value of $\rho_{max}$ grows asymptotically as $B^{\frac{1}{\gamma}}$, i.e. $u^{\frac{2}{\gamma}}$.
\end{lemma}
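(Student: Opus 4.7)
The plan is to analyze $\ep_1(\rho_1)$ as a continuous function on the open half-line $(\max\{\rho_-,\rho_+\}, +\infty)$, in three steps: (i) determine the signs at the two endpoints, (ii) identify the zero via reduction to the classical two-shock Riemann problem, and (iii) read off the asymptotic growth from the implicit relation defining $\rho_m$.

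First I would compute $\lim_{\rho_1 \to \max\{\rho_-,\rho_+\}^+} \ep_1(\rho_1)$. In the representative case $R > 0$ (so $\max\{\rho_-,\rho_+\} = \rho_-$), the only potentially singular contribution in \eqref{eq:ep1} is the square of $\tfrac{\sqrt{B}}{R}\sqrt{(\rho_1-\rho_+)/(\rho_1-\rho_-)}$; when multiplied by the prefactor $\rho_-(\rho_1-\rho_-)/\rho_1^2$ the apparent singularity cancels and yields $B/(R\rho_-)$, while the cross term and the pure constant term vanish, and $(p(\rho_1)-p(\rho_-))/\rho_1 \to 0$. The hypothesis \eqref{eq:2shocks condition} gives $B > 0$, so the limit is strictly positive. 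The cases $R < 0$ and $R = 0$ are handled by the same bookkeeping using \eqref{eq:ep10} and \eqref{eq:ep12}. At the other endpoint, the quadratic bracket in \eqref{eq:ep1} tends to a finite constant while its prefactor $\rho_-(\rho_1-\rho_-)/\rho_1^2$ decays like $1/\rho_1$, whereas $(p(\rho_1)-p(\rho_-))/\rho_1$ is bounded below away from zero as $\rho_1 \to \infty$; hence $\ep_1$ is strictly negative for large $\rho_1$.

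By continuity this gives at least one zero. For \emph{uniqueness}, which is the conceptual heart of the argument, I would observe that setting $\ep_1 = 0$ reduces the four identities \eqref{eq:cont_left ss}-\eqref{eq:mom_2_right ss} exactly to the Rankine-Hugoniot conditions connecting $(\rho_-,\rho_-v_{-2})$ to $(\rho_1,\rho_1\beta)$ with speed $\nu_-$ and $(\rho_1,\rho_1\beta)$ to $(\rho_+,\rho_+v_{+2})$ with speed $\nu_+$. Since $\rho_1 > \max\{\rho_-,\rho_+\}$, Lemma \ref{l:simple waves} identifies these as an admissible $1$-shock on the left and an admissible $3$-shock on the right, and case~5) of Lemma \ref{l:Riemann selfsimilar solutions} (applicable precisely under hypothesis \eqref{eq:2shocks condition}) asserts that the intermediate density of such a configuration is uniquely determined, namely $\rho_1 = \rho_m$. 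Combining this with the signs at the endpoints, $\rho_{max} := \rho_m$ is the unique zero of $\ep_1$, with $\ep_1 > 0$ to its left and $\ep_1 < 0$ to its right.

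For the asymptotics I would plug into the implicit formula
\[
-u \;=\; \sqrt{\tfrac{(\rho_m-\rho_-)(p(\rho_m)-p(\rho_-))}{\rho_m\rho_-}} + \sqrt{\tfrac{(\rho_m-\rho_+)(p(\rho_m)-p(\rho_+))}{\rho_m\rho_+}}
\]
from case~5) of Lemma \ref{l:Riemann selfsimilar solutions}, let $|u| \to \infty$ with $\rho_\pm$ fixed, and note that necessarily $\rho_m \to \infty$. In that regime each radicand behaves like $\rho_m^{\gamma}/\rho_\pm$, whence $|u| \sim c\, \rho_m^{\gamma/2}$ and therefore $\rho_m \sim |u|^{2/\gamma}$; since $B = \rho_-\rho_+ u^2 - R(p(\rho_-)-p(\rho_+))$ grows like $u^2$, equivalently $\rho_m \sim B^{1/\gamma}$. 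The main (minor) obstacle I foresee is verifying the endpoint limit uniformly across the three sign cases of $R$; but this is straightforward bookkeeping, not a conceptual difficulty, since the structural cancellation $\sqrt{(\rho_1-\rho_-)(\rho_1-\rho_+)} \to 0$ occurs in all three formulas.
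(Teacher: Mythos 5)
Your route is genuinely different from the paper's. The paper proves the sign pattern of $\ep_1$ by showing (in the representative case $R>0$) that $\ep_1$ is \emph{monotone decreasing} on $(\rho_-,\widetilde\rho)$, where $\widetilde\rho$ is the (possibly infinite) point at which $\sqrt{B}\sqrt{(\rho_1-\rho_+)/(\rho_1-\rho_-)}=\rho_+|u|$, and then, in the sub-case where $\widetilde\rho<\infty$, closes the argument by the explicit estimate chain \eqref{eq:ep1 7}--\eqref{eq:ep1 11} showing $\ep_1<0$ on $(\widetilde\rho,\infty)$. You instead try to deduce uniqueness of the zero entirely from the structural observation that $\ep_1=0$ reduces to the Rankine--Hugoniot system, invoking the uniqueness in case~5) of Lemma~\ref{l:Riemann selfsimilar solutions}. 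This is cleaner where it works, and your endpoint-limit computation ($\ep_1\to B/(R\rho_-)>0$ as $\rho_1\to\rho_-^+$, and $\ep_1<0$ for large $\rho_1$) is correct, as is the asymptotic extraction $\rho_m\sim|u|^{2/\gamma}\sim B^{1/\gamma}$.

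However, there is a genuine gap in the uniqueness step. Your reduction only shows that a zero of $\ep_1$ at which the configuration is an \emph{admissible} two-shock fan must equal $\rho_m$. But the signs baked into \eqref{eq:num}--\eqref{eq:beta} make $v_{-2}-\beta=\frac{\rho_1-\rho_-}{\rho_1}\left(\frac{\rho_+u}{R}+\frac{\sqrt B}{R}\sqrt{\frac{\rho_1-\rho_+}{\rho_1-\rho_-}}\right)$, which is positive for all $\rho_1>\rho_-$ only when $\sqrt B\geq\rho_+|u|$, i.e.\ $u^2\geq (p(\rho_-)-p(\rho_+))/\rho_+$. When $u^2<(p(\rho_-)-p(\rho_+))/\rho_+$ (a regime that is compatible with \eqref{eq:2shocks condition}), there is a finite $\widetilde\rho$ past which $\beta>v_{-2}$; a hypothetical zero of $\ep_1$ in $(\widetilde\rho,\infty)$ would satisfy the Rankine--Hugoniot identities but with the sign pattern of a \emph{non-admissible} $1$-wave, so the uniqueness assertion of Lemma~\ref{l:Riemann selfsimilar solutions}~5), which concerns the admissible two-shock solution, does not preclude it. Your argument therefore only bounds the number of zeros in $(\rho_-,\widetilde\rho)$, and leaves open the possibility of extra sign changes beyond $\widetilde\rho$, which would violate the claimed conclusion $\ep_1<0$ on $(\rho_{max},\infty)$. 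To close the gap you would need precisely what the paper supplies there: a direct bound showing $\ep_1<0$ on $(\widetilde\rho,\infty)$, e.g.\ the supremum argument \eqref{eq:ep1 8}--\eqref{eq:ep1 11} yielding $\bigl(\rho_+|u|-\sqrt B\,\sqrt{(\rho_1-\rho_+)/(\rho_1-\rho_-)}\bigr)^2<p'(\rho_-)R^2<\frac{p(\rho_1)-p(\rho_-)}{\rho_1-\rho_-}\cdot\frac{\rho_1 R^2}{\rho_-}$.
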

\begin{proof}
We start with the case $R = 0$, which is easy, because the function
\begin{equation}
 \ep_1(\rho_1) = \frac{\rho_-u^2}{4(\rho_1-\rho_-)} - \frac{p(\rho_1)-p(\rho_-)}{\rho_1}
\end{equation}
is obviously strictly decreasing with limits $+\infty$ as $\rho_1 \sil \rho_-$ and $-\infty$ as $\rho_1\sil+\infty$.

Now assume $R > 0$ and observe that the case $R < 0$ can be treated exactly in the same way just switching $\rho_-$ and $\rho_+$. We distinguish two cases. First assume that 
\begin{equation}\label{eq:ep1 as1}
u^2 \geq \frac{p(\rho_-)-p(\rho_+)}{\rho_+},
\end{equation}
which is equivalent to $\sqrt{B} \geq \rho_+\abs{u}$. Then observe that it is enough to show that function 
\begin{equation}\label{eq:ep1 1}
 \widetilde{\ep_1}(\rho_1) := \left(\sqrt{B}\sqrt{\frac{\rho_1-\rho_+}{\rho_1-\rho_-}} - \rho_+\abs{u}\right)^2\frac{\rho_1-\rho_-}{\rho_1}
\end{equation}
is decreasing on $(\rho_-,+\infty)$, because the term containing pressure in \eqref{eq:ep1} is obviously decreasing. We rewrite \eqref{eq:ep1 1} further to
\begin{equation}\label{eq:ep1 2}
 \widetilde{\ep_1}(\rho_1) := \left(\sqrt{B}\sqrt{1-\frac{\rho_+}{\rho_1}}-\rho_+\abs{u}\sqrt{1-\frac{\rho_-}{\rho_1}}\right)^2
\end{equation}
and study the function
\begin{equation}\label{eq:ep1 3}
 f(\rho_1) := \sqrt{B}\sqrt{1-\frac{\rho_+}{\rho_1}}-\rho_+\abs{u}\sqrt{1-\frac{\rho_-}{\rho_1}}.
\end{equation}
Note that $f(\rho_1) > 0$ for all $\rho_1 > \rho_-$ due to assumption \eqref{eq:ep1 as1}. A standard calculation yields
\begin{equation}\label{eq:ep1 4}
 f'(\rho_1) := \frac{\rho_1^{-\frac{3}{2}}}{2}\left(\frac{\sqrt{B}\rho_+}{\sqrt{\rho_1-\rho_+}} - \frac{\rho_+\rho_-\abs{u}}{\sqrt{\rho_1-\rho_-}}\right).
\end{equation}
Observing that 
\begin{equation}\label{eq:ep1 5}
 \rho_-\rho_+\abs{u}^2 - (\rho_--\rho_+)(p(\rho_-)-p(\rho_+)) = B < \rho_-^2\abs{u}^2
\end{equation}
we conclude that $f'(\rho_1)$ is negative on $(\rho_-,+\infty)$, thus $f(\rho_1)$ is decreasing on this interval. Therefore also $\widetilde{\ep_1}(\rho_1)$ and $\ep_1(\rho_1)$ are decreasing on $(\rho_-,+\infty)$ with $\lim_{\rho_1 \sil +\infty}\ep(\rho_1) < 0$.

Next let 
\begin{equation}\label{eq:ep1 as2}
u^2 < \frac{p(\rho_-)-p(\rho_+)}{\rho_+}
\end{equation}
and thus $\sqrt{B} < \rho_+\abs{u}$. Then there exist a finite $\widetilde{\rho}$ such that 
\begin{equation}\label{eq:ep1 6}
 \sqrt{B}\sqrt{1-\frac{\rho_+}{\widetilde{\rho}}} = \rho_+\abs{u}\sqrt{1-\frac{\rho_-}{\widetilde{\rho}}}.
\end{equation}
On interval $(\rho_-,\widetilde{\rho})$ we can argue in the same way as above, function $\widetilde{\ep_1}$ is on this interval clearly decreasing. Moreover obviously $\ep_1(\widetilde{\rho}) = -\frac{p(\widetilde{\rho})-p(\rho_-)}{\widetilde{\rho}} < 0$. Therefore it is enough now to prove that $\ep_1(\rho)$ stays negative on $(\widetilde{\rho},+\infty)$. In other words we want to prove that it holds
\begin{equation}\label{eq:ep1 7}
 \left(\rho_+\abs{u} - \sqrt{B}\sqrt{\frac{\rho_1-\rho_+}{\rho_1-\rho_-}}\right)^2 < \frac{p(\rho_1)-p(\rho_-)}{\rho_1 - \rho_-}\frac{\rho_1R^2}{\rho_-}
\end{equation}
on the interval $(\widetilde{\rho},+\infty)$. We claim that it always holds
\begin{equation}\label{eq:ep1 8}
 (\rho_+\abs{u} - \sqrt{B})^2 = \left(\rho_+\abs{u} - \sqrt{\rho_-\rho_+\abs{u}^2 - (\rho_--\rho_+)(p(\rho_-)-p(\rho_+))}\right)^2 < p'(\rho_-)R^2.
\end{equation}
Inequality \eqref{eq:ep1 8} is proved by taking supremum over all possible $\abs{u}$ on the left hand side. Note that possible values of $\abs{u}$ form the interval $\Big(\sqrt{\frac{(\rho_--\rho_+)(p(\rho_-)-p(\rho_+))}{\rho_-\rho_+}}, \sqrt{\frac{p(\rho_-)-p(\rho_+)}{\rho_+}}\Big)$. It is not difficult to see that the supremum is achieved in point $\abs{u} = \sqrt{\frac{(\rho_--\rho_+)(p(\rho_-)-p(\rho_+))}{\rho_-\rho_+}}$ and its value is 
\begin{equation}\label{eq:ep1 9}
 \sup_{\abs{u}}\, (\rho_+\abs{u} - \sqrt{B})^2 = \frac{\rho_+}{\rho_-}(p(\rho_-)-p(\rho_+))(\rho_--\rho_+).
\end{equation}
Arguing that $p(\rho_-)-p(\rho_+) = p'(\xi)(\rho_--\rho_+)$ for some $\xi \in (\rho_+,\rho_-)$ we achieve\footnote{Note that $p'(\rho)$ is nondecreasing for $p(\rho) = \rho^\gamma$ with $\gamma \geq 1$.}
\begin{equation}\label{eq:ep1 10}
 (\rho_+\abs{u} - \sqrt{B})^2  \leq \sup_{\abs{u}}\, (\rho_+\abs{u} - \sqrt{B})^2 = \frac{\rho_+}{\rho_-}p'(\xi)R^2 < p'(\rho_-)R^2,
\end{equation}
which proves \eqref{eq:ep1 8}. The proof of \eqref{eq:ep1 7} is now completed by the following chain of inequalities:
\begin{equation}\label{eq:ep1 11}
 \left(\rho_+\abs{u} - \sqrt{B}\sqrt{\frac{\rho_1-\rho_+}{\rho_1-\rho_-}}\right)^2 < (\rho_+\abs{u} - \sqrt{B})^2 < p'(\rho_-)R^2 < \frac{p(\rho_1)-p(\rho_-)}{\rho_1 - \rho_-}\frac{\rho_1R^2}{\rho_-}.
\end{equation}

Concerning the second statement of the Lemma, it is enough to observe that equations \eqref{eq:cont_left ss}--\eqref{eq:mom_2_right ss} with $\ep_1 = 0$ are exactly the Rankine--Hugoniot shock conditions which are satisfied by the self-similar solution. As is shown in the following Lemma \ref{l:positivity conditions} and formulas \eqref{eq:ep2 2} and \eqref{eq:ep2 4}, the admissibility conditions \eqref{eq:E_left ss}--\eqref{eq:E_right ss} together with \eqref{eq:sub_2 ss} yield $v_{+2} < \beta < v_{-2}$, i.e. the same conclusion as the admissibility conditions for the self-similar solution.

The asymptotic growth of $\rho_{max}$ with respect to $B \sim u^2 \sil \infty$ is an easy observation.
\end{proof}

To study consequences of the admissibility inequalities \eqref{eq:E_left ss}--\eqref{eq:E_right ss} we need to know the signs of $v_{-2} - \nu_-$ and $\nu_+ - v_{+2}$.
\begin{lemma}\label{l:positivity conditions}
 For any $u < -\sqrt{\frac{(\rho_--\rho_+)(p(\rho_-)-p(\rho_+))}{\rho_-\rho_+}}$ and any $\rho_1 \in (\max\{\rho_-,\rho_+\},\rho_{max})$ it holds
 \begin{align}
  v_{-2}-\nu_- &> 0 \label{eq:cond1}\\
  \nu_+ - v_{+2} &>0. \label{eq:cond2}
 \end{align}
\end{lemma}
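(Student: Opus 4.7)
The plan is to reduce both claimed inequalities to a single statement about the intermediate velocity $\beta$, namely $v_{+2} < \beta < v_{-2}$, and then derive the latter from the positivity of $\ep_1$ plus a continuity argument anchored at the degenerate case $\rho_1 = \rho_{max}$.

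First I would exploit \eqref{eq:cont_left ss} and \eqref{eq:cont_right ss}. Solving each for $\nu_\pm$ and subtracting $v_{\mp 2}$ from both sides gives the identities
\[
 v_{-2}-\nu_- = \frac{\rho_1(v_{-2}-\beta)}{\rho_1-\rho_-}, \qquad \nu_+-v_{+2} = \frac{\rho_1(\beta-v_{+2})}{\rho_1-\rho_+}.
\]
Under the hypothesis $\rho_1 > \max\{\rho_-,\rho_+\}$, both denominators on the right are strictly positive (and also $\rho_1>0$), so \eqref{eq:cond1}--\eqref{eq:cond2} are equivalent to $v_{-2} > \beta$ and $\beta > v_{+2}$ respectively.

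Next I would pin down $\beta$ via the identity \eqref{eq:useful ss}. That identity is obtained by eliminating $\nu_-$ between \eqref{eq:cont_left ss} and \eqref{eq:mom_2_left ss}; running the analogous computation on the right interface with \eqref{eq:cont_right ss} and \eqref{eq:mom_2_right ss} yields the twin identity
\[
 (\rho_1-\rho_+)(p(\rho_1)-p(\rho_+)) = \rho_1\rho_+(\beta-v_{+2})^2 + \ep_1\rho_1(\rho_1-\rho_+).
\]
Rearranging both identities gives
\[
 \rho_-\rho_1(\beta-v_{-2})^2 = (\rho_1-\rho_-)\bigl(p(\rho_1)-p(\rho_-)+\rho_1\ep_1\bigr), \qquad \rho_+\rho_1(\beta-v_{+2})^2 = (\rho_1-\rho_+)\bigl(p(\rho_1)-p(\rho_+)+\rho_1\ep_1\bigr).
\]
Since $\rho_1 > \max\{\rho_-,\rho_+\}$, the monotonicity of $p$ and the Lemma \ref{l:ep1 decreasing} bound $\ep_1>0$ make both right-hand sides strictly positive. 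Consequently $\beta-v_{-2}$ and $\beta-v_{+2}$ do not vanish anywhere on the parameter interval $(\max\{\rho_-,\rho_+\},\rho_{max})$.

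To fix the signs I would use a continuity/boundary argument. As $\rho_1$ varies in the connected interval, $\beta(\rho_1)$ is continuous (it is given explicitly by \eqref{eq:beta} or \eqref{eq:beta2}), so the signs of $\beta-v_{-2}$ and $\beta-v_{+2}$ are constant. To identify them I pass to the endpoint $\rho_1 \to \rho_{max}^-$: by Lemma \ref{l:ep1 decreasing} we have $\ep_1 \to 0$ and $\rho_{max} = \rho_m$, so the subsolution degenerates into the self-similar two-shock solution of case 5) in Lemma \ref{l:Riemann selfsimilar solutions}, with $\beta \to v_m$. From the explicit shock formulas of Lemma \ref{l:simple waves} (or equivalently the hypothesis \eqref{eq:2shocks condition} combined with Lemma \ref{l:admiss condition}, which forces velocities to decrease across each admissible shock) one reads off $v_{+2} < v_m < v_{-2}$. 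Hence $v_{-2}-\beta > 0$ and $\beta-v_{+2}>0$ at the endpoint, and by the constant-sign principle on the whole interval, which together with the first step yields \eqref{eq:cond1}--\eqref{eq:cond2}.

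The only mildly delicate step is the algebraic derivation of the two quadratic identities for $(\beta-v_{\pm 2})^2$; everything else is a routine sign analysis. No essential obstacle is expected, since the Lax admissibility of the self-similar shocks directly supplies the required boundary sign.
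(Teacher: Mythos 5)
Your proof is correct but follows a genuinely different route from the paper's. The paper's proof is a direct sign computation: it invokes the explicit formula $v_{-2}-\nu_-(\rho_1)=\frac{\rho_+u}{R}+\frac{\sqrt{B}}{R}\sqrt{\frac{\rho_1-\rho_+}{\rho_1-\rho_-}}$ and the observation, made inside the proof of Lemma~\ref{l:ep1 decreasing}, that this quantity is positive on the parameter interval; for $\nu_+-v_{+2}$ it runs a short chain of inequalities ending with $B<\rho_-^2 u^2$. Your approach instead reduces \eqref{eq:cond1}--\eqref{eq:cond2} to the double inequality $v_{+2}<\beta<v_{-2}$ (via the identities \eqref{eq:ep2 2} and \eqref{eq:ep2 4}, which the paper only uses \emph{after} this lemma), establishes non-vanishing of $\beta-v_{\pm2}$ from the positivity of $\ep_1$ and the twin quadratic identities, and then fixes the sign by a continuity argument anchored at $\rho_1\to\rho_{max}$ using Lax admissibility of the self-similar shocks. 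That structural argument avoids the long formula chasing and is arguably cleaner; the paper's computation is more elementary and does not need any limiting argument.

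One caution worth addressing explicitly: you cite Lemma~\ref{l:ep1 decreasing} for the identification $\rho_{max}=\rho_m$ and the limit $\beta\to v_m$, but as written the paper's justification of that \emph{second} statement of Lemma~\ref{l:ep1 decreasing} itself forward-references Lemma~\ref{l:positivity conditions} and the formulas \eqref{eq:ep2 2}, \eqref{eq:ep2 4}. So if your proof were dropped verbatim into the paper's exposition, it would create a circular chain. The circularity is not essential: the identity $\rho_{max}=\rho_m$ (together with $\beta(\rho_{max})=v_{m2}$ and $\nu_\pm(\rho_{max})=s_{1,3}$) follows directly from the facts that at $\ep_1=0$ the relations \eqref{eq:cont_left ss}--\eqref{eq:mom_2_right ss} become the classical Rankine--Hugoniot conditions, that the self-similar two-shock intermediate state $(\rho_m,v_{m2})$ satisfies $\rho_m>\max\{\rho_-,\rho_+\}$ and $s_1<s_3$, and that the subsolution parameterization picks the unique branch with $\nu_-<\nu_+$. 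You should spell this out (or cite the uniqueness directly) rather than appealing to the second statement of Lemma~\ref{l:ep1 decreasing}, whose proof in the paper is deferred to the very lemma you are proving.
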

\begin{proof}
In the case $R = 0$ relations \eqref{eq:cond1}--\eqref{eq:cond2} follow directly from \eqref{eq:num2} and \eqref{eq:nup2}.

Now we prove Lemma \ref{l:positivity conditions} in the case $R > 0$ and again claim that the case $R < 0$ can be treated by the same arguments. The inequality \eqref{eq:cond1} follows directly from the proof of Lemma \ref{l:ep1 decreasing}. We have proved there that 
 \begin{equation}
  v_{-2}-\nu_-(\rho_1) = \frac{\rho_+u}{R} + \frac{\sqrt{B}}{R}\sqrt{\frac{\rho_1-\rho_+}{\rho_1-\rho_-}}
 \end{equation}
 is in the case $u^2 \geq \frac{p(\rho_-)-p(\rho_+)}{\rho_+}$ positive for all $\rho_1 > \rho_-$, while in the case $u^2 < \frac{p(\rho_-)-p(\rho_+)}{\rho_+}$ it is positive on the interval $(\rho_-,\widetilde{\rho})$ with $\widetilde{\rho} > \rho_{\max}$.
 
 To prove the inequality \eqref{eq:cond2} we proceed as follows:
 \begin{align}
 &\nu_+(\rho_1) - v_{+2} = \frac{\rho_-\abs{u}}{R} - \frac{\sqrt{B}}{R}\sqrt{\frac{\rho_1-\rho_-}{\rho_1-\rho_+}} > \frac{1}{R}\left(\rho_-\abs{u} - \sqrt{B}\right) \nonumber \\
 = &\frac{1}{R}\left(\rho_-\abs{u} - \sqrt{\rho_-\rho_+\abs{u}^2 - (\rho_--\rho_+)(p(\rho_-)-p(\rho_+))}\right) \nonumber \\
 > &\frac{1}{R}\left(\rho_-\abs{u} - \sqrt{\rho_-\rho_+\abs{u}^2}\right) > 0.
 \end{align}
\end{proof}

We now rewrite further the admissibility inequalities \eqref{eq:E_left ss}--\eqref{eq:E_right ss}. First observe that from \eqref{eq:cont_left ss} and \eqref{eq:cont_right ss} we have
\begin{align}
 \beta - \nu_- &= \frac{\rho_-}{\rho_1}\left(v_{-2}-\nu_-\right) \label{eq:ep2 1}\\
 v_{-2} - \beta &= \frac{\rho_1-\rho_-}{\rho_1}\left(v_{-2}-\nu_-\right) \label{eq:ep2 2}\\
 \nu_+ - \beta &= \frac{\rho_+}{\rho_1}\left(\nu_+ - v_{+2}\right) \label{eq:ep2 3}\\
 \beta - v_{+2} &= \frac{\rho_1-\rho_+}{\rho_1}\left(\nu_+-v_{+2}\right) \label{eq:ep2 4}
\end{align}
and thus all the terms on the left hand sides of \eqref{eq:ep2 1}--\eqref{eq:ep2 4} are positive on the interval $(\max\{\rho_-,\rho_+\},\rho_{max})$. Let us denote
\begin{equation}\label{eq:P def}
 P(r,s) := p(r)+p(s)-2rs\frac{\ep(r)-\ep(s)}{r-s}
\end{equation}
and recall that for $p(\rho) = \rho^{\gamma}$, $\gamma \geq 1$ it holds $P(r,s) > 0$ for $r \neq s$ due to Lemma \ref{l:admiss function}. We now rewrite the admissibility condition on the left interface \eqref{eq:E_left ss} as follows
\begin{equation}\label{eq:E_left 3}
(\nu_--v_{-2})\frac{\rho_1-\rho_-}{\rho_1}P(\rho_-,\rho_1) \leq \ep_1\rho_1(v_{-2}+\beta) + (\ep_1+\ep_2)\rho_-(\nu_--v_{-2}).
\end{equation}
By an easy calculation we get
\begin{equation}\label{eq:E_left 4}
(\ep_1+\ep_2)\rho_-(v_{-2}-\nu_-) \leq \ep_1\rho_1(v_{-2}+\beta) + (v_{-2}-\nu_-)\frac{\rho_1-\rho_-}{\rho_1}P(\rho_-,\rho_1)
\end{equation}
and consequently
\begin{equation}\label{eq:E left final}
 \ep_2 < \frac{P(\rho_-,\rho_1)(\rho_1-\rho_-)}{\rho_1\rho_-} + \ep_1\frac{\rho_1(v_{-2}+\nu_-)}{\rho_-(v_{-2}-\nu_-)}.
\end{equation}
Treating the admissibility condition on the right interface \eqref{eq:E_right ss} in a similar way we achieve
\begin{equation}\label{eq:E right final}
 \ep_2 < \frac{P(\rho_+,\rho_1)(\rho_1-\rho_+)}{\rho_1\rho_+} - \ep_1\frac{\rho_1(\nu_++v_{+2})}{\rho_-(\nu_+-v_{+2})}.
\end{equation}

The proof of Theorem \ref{t:main0} is finished by observing that in the point $\rho_1 = \rho_{max}$ it holds $\ep_1(\rho_1) = 0$ and thus the right hand sides of \eqref{eq:E left final} and \eqref{eq:E right final} are both strictly positive. Thus, by a simple continuity argument, we conclude that for any initial Riemann data satisfying \eqref{eq:2shocks condition} there exist (in fact even infinitely many) admissible subsolutions parametrized by $\rho_1$ belonging to some left neighborhood of $\rho_{max}$.

\section{Proof of Theorem \ref{t:main}}\label{s:Dis}

In this section we prove Theorem \ref{t:main}. First we recall the definitions of energy and dissipation rate of a solution from Section \ref{s:entropy}.
\begin{align}
&E_L[\rho,v](t) = \int_{(-L,L)^2} \left(\rho\ep(\rho) + \rho\frac{\abs{v}^2}{2}\right)\d x \label{eq:energy L 2}\\
&D_L[\rho,v](t) = \frac{\d_{+} E_L[\rho,v](t)}{\d t}. \label{eq:dissipation rate L 2}
\end{align}
Assume from now on for simplicity that 
\begin{equation}\label{eq:alpha 0}
 v_{-1} = v_{+1} = \alpha = 0.
\end{equation}
The value of the dissipation rate $D_L[\rho_c,v_c](t)$ has a specific form when the solution $(\rho_c,v_c)$ is a self-similar solution consisting of two shocks of speeds $\nu_-$ and $\nu_+$. Denoting the intermediate state $(\rho_m,\rho_mv_m) = (\rho_m,(0,\rho_mv_{m2}))$ and introducing the notation
\begin{align}
 E_\pm &:= \rho_\pm\ep(\rho_\pm) + \rho_\pm\frac{v_\pm^2}{2} \label{eq:E pm} \\
 E_m &:= \rho_m\ep(\rho_m) + \rho_m\frac{v_{m2}^2}{2}
\end{align}
we obtain
\begin{equation}\label{eq:Dis rate for 2 shocks}
 D_L[\rho_c,v_c](t) = 2L\left(\nu_-(E_--E_m) + \nu_+(E_m-E_+)\right)
\end{equation}
at least for $t \leq T^*$ with some $T^*$ depending on $L^*$.

Now let us consider a solution $(\rho_n,v_n)$ with the same initial data constructed from an admissible subsolution using Proposition \ref{p:subs}. Although $v_n$ is not constant in $P_1$ we still have that $\abs{v_n}^2\bm{1}_{P_1} = C = \beta^2 + \ep_1+\ep_2$, in particular
\begin{equation}
 E_1 = \rho_1\ep(\rho_1) + \rho_1\frac{\beta^2+\ep_1+\ep_2}{2}
\end{equation}
is constant in $P_1$. The dissipation rate for all solutions $(\rho_n,v_n)$ constructed from a given subsolution with intermediate state $(\rho_1,\rho_1v_1) = (\rho_1,(0,\rho_1\beta))$ is thus given by a similar expression as \eqref{eq:Dis rate for 2 shocks}, more precisely it holds
\begin{equation}\label{eq:Dis rate for subsol}
 D_L[\rho_n,v_n](t) = 2L\left(\nu_-(E_--E_1) + \nu_+(E_1-E_+)\right)
\end{equation}
again at least for sufficiently small $t$.

Let us now therefore study further properties of the function 
\begin{equation}\label{eq:def f}
 f(\rho_1) := \nu_-(\rho_1)(E_- - E_1(\rho_1)) + \nu_+(\rho_1)(E_1(\rho_1) - E_+) = \frac{D_L[\rho_n,v_n](t)}{2L}
\end{equation}
in a case $p(\rho) = \rho^\gamma$, $\gamma \geq 1$. First observe that our goal is to make $f(\rho_1)$ small. Considering the dependence of $f(\rho_1)$ on $\ep_2$ we easily see that the smallest possible value of $f(\rho_1)$ is achieved by taking $\ep_2 = 0$.  In fact it is easy to see that it holds
\begin{equation}\label{eq:gg1}
 \lim_{\rho_1 \sil \rho_{max}} \lim_{\ep_2 \sil 0} 2Lf(\rho_1) = D_L[\rho_c,v_c](t)
\end{equation}
for sufficiently small $t$. Indeed, using Lemma \ref{l:ep1 decreasing} we easily see that $\rho_{max} = \rho_m$ and for $\ep_1 = \ep_2 = 0$ we get also $\beta = v_{m2}$ and $\nu_\pm(\rho_{max})$ are exactly the shock speeds of the self-similar solution. We conclude therefore that Theorem \ref{t:main} is a direct consequence of the following Lemma.
\begin{lemma}\label{l:f increasing}
 Let $1\leq \gamma < 3$. There exist initial data $\rho_\pm$, $v_{\pm2}$ for which the function $f(\rho_1)$ defined in \eqref{eq:def f} is increasing in the neighborhood of $\rho_{max}$.
\end{lemma}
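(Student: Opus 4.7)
The plan is to exhibit symmetric Riemann data $\rho_- = \rho_+ = \rho_0$, $v_{-2} = -v_{+2} = v_0 > 0$, and to show that $f'(\rho_{max}) > 0$ for all $v_0$ sufficiently large. With these data the formulas \eqref{eq:num2}--\eqref{eq:ep12} collapse to $\beta = 0$, $\nu_\pm = \pm \rho_0 v_0 / (\rho_1 - \rho_0)$, and
\[
\ep_1(\rho_1) = \frac{\rho_0 v_0^2}{\rho_1 - \rho_0} - \frac{p(\rho_1) - p(\rho_0)}{\rho_1},
\]
so that $\rho_{max}$ is the unique root of $\ep_1 = 0$ on $(\rho_0, \infty)$. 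Taking $\ep_2 = 0$ (which minimizes the subsolution dissipation at fixed $\rho_1$), using $E_- = E_+$, and writing $y = \rho_1 - \rho_0$ together with
\[
g(y) := (\rho_0 + y)\ep(\rho_0 + y) - \rho_0\ep(\rho_0) - \tfrac{1}{2}\bigl(p(\rho_0+y) - p(\rho_0)\bigr),
\]
the function $f$ collapses to the two-term form $f(y) = 2\rho_0 v_0 g(y)/y + \rho_0^3 v_0^3/y^2$.

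Next I would differentiate to obtain
\[
f'(y) = \frac{2\rho_0 v_0}{y^3}\bigl[y^2 g'(y) - y g(y) - \rho_0^2 v_0^2\bigr],
\]
and eliminate $v_0^2$ via the defining relation $\rho_0 v_0^2 (\rho_0 + y_{max}) = y_{max}(p(\rho_0+y_{max}) - p(\rho_0))$. The inequality $f'(y_{max}) > 0$ is then equivalent to
\[
y_{max} g'(y_{max}) - g(y_{max}) > \frac{\rho_0\bigl(p(\rho_0+y_{max}) - p(\rho_0)\bigr)}{\rho_0 + y_{max}}.
\]
For $p(\rho) = \rho^\gamma$ with $\gamma \in (1, 3)$, a short direct computation using $\rho\ep(\rho) = \rho^\gamma/(\gamma-1)$ yields $g(y) = \tfrac{3-\gamma}{2(\gamma-1)}[(\rho_0+y)^\gamma - \rho_0^\gamma]$ and a closed-form expression for $y g'(y) - g(y)$, both being strictly positive multiples of $(3 - \gamma)$. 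The borderline case $\gamma = 1$ is handled separately and gives $y g'(y) - g(y) = y - \rho_0 \log(1 + y/\rho_0) > 0$ by strict concavity of $\log$.

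Lemma \ref{l:ep1 decreasing} guarantees $y_{max} \to \infty$ as $v_0 \to \infty$, and a scaling count then shows $y^2 g'(y) - y g(y) \sim \tfrac{3-\gamma}{2}\, y^{\gamma+1}$ while $\rho_0^2 v_0^2 \sim \rho_0\, y_{max}^\gamma$ along $y = y_{max}$; the former dominates for $\gamma < 3$ and the inequality holds once $v_0$ is large enough. Since $f'$ is continuous in $\rho_1$, $f'(\rho_{max}) > 0$ implies that $f$ is strictly increasing on a left neighborhood of $\rho_{max}$, which is the content of the lemma. The main conceptual point, beyond routine algebra, is recognizing that both $g$ and $yg' - g$ carry the factor $(3 - \gamma)$: this explains precisely why $\gamma = 3$ is the critical exponent, since at $\gamma = 3$ one has $g \equiv 0$, $f(y) = \rho_0^3 v_0^3 / y^2$ is strictly decreasing, and the sign of $f'(y_{max})$ reverses --- suggesting that in the symmetric family the self-similar solution would actually become entropy-rate admissible for $\gamma \geq 3$.
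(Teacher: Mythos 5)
Your proof is correct, and it takes a genuinely simpler route than the paper's. The paper carries out the computation for the general asymmetric case $R = \rho_- - \rho_+ > 0$: it introduces $Q(\rho) = 2\rho\ep(\rho)-p(\rho)$, strips off the constant part $C_0$ of $f$, reduces matters to a function $g(\rho_1)$ with a $\sqrt{(\rho_1-\rho_-)(\rho_1-\rho_+)}$ denominator, locates the unique critical point $\overline{\rho}$ of $g$ via a convexity argument (Proposition \ref{p:g}), and then compares the growth rate $\overline{\rho}\sim B^{1/(\gamma+1)}$ against $\rho_{max}\sim B^{1/\gamma}$ (the case $R=0$ is only mentioned in a remark as possible ``by the same arguments''). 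You instead specialize to the symmetric data $\rho_-=\rho_+=\rho_0$, $v_{\pm 2}=\mp v_0$, for which $\beta=0$, $\nu_-=-\nu_+$, $E_-=E_+$, and the formulas \eqref{eq:num2}--\eqref{eq:ep12} collapse $f$ to a two-term expression $f(y)=2\rho_0 v_0 g(y)/y+\rho_0^3 v_0^3/y^2$ with $y=\rho_1-\rho_0$. This lets you differentiate $f$ directly, substitute the constraint $\ep_1(\rho_{max})=0$, and read off the scaling: $y^2g'(y)-yg(y)\sim\tfrac{3-\gamma}{2}y^{\gamma+1}$ dominates $\rho_0^2v_0^2\sim\rho_0 y_{max}^\gamma$ as $y_{max}\to\infty$. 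The underlying asymptotic comparison (exponent $\gamma+1$ beats $\gamma$) is the same as the paper's, but you bypass the intermediate analysis of $g'$ and its roots; note also that your $g(y)$ equals $\tfrac12\bigl(Q(\rho_0+y)-Q(\rho_0)\bigr)$, which explains why your $(3-\gamma)$ factor matches the paper's observation that $Q\equiv 0$ at $\gamma=3$, and why that exponent is critical in both arguments. All the steps check out: the formula for $\ep_1$ in the symmetric case, the reduction $\rho_0^2v_0^2=\rho_0 y_{max}\bigl(p(\rho_0+y_{max})-p(\rho_0)\bigr)/(\rho_0+y_{max})$ at the root of $\ep_1$, the closed form and positivity of $yg'(y)-g(y)$ for $\gamma\in(1,3)$, and the separate treatment of $\gamma=1$.
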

\begin{remark}
 We find such initial data by analysing the case $R > 0$, nevertheless by the same arguments we could find initial data satisfying Lemma \ref{l:f increasing} also with $R = 0$ and $R < 0$.
 \end{remark}
\begin{proof}
 Let $R > 0$. Denote 
 \begin{equation}
Q(\rho) := 2\rho\ep(\rho)-p(\rho).
 \end{equation}
 and observe that $Q(\rho) = \frac{3-\gamma}{\gamma-1}\rho^\gamma$ in the case $\gamma > 1$, whereas $Q(\rho) = 2\rho\log\rho-\rho$ for $\gamma = 1$. Using \eqref{eq:mom_2_left ss} and \eqref{eq:mom_2_right ss} we achieve
 \begin{align}
  E_- - E_1(\rho_1) &= \frac{1}{2}\left(Q(\rho_-)-Q(\rho_1)+\nu_-^2(\rho_1)(\rho_--\rho_1)\right) \label{eq:gg2}\\
  E_1(\rho_1) - E_+ &= \frac{1}{2}\left(Q(\rho_1)-Q(\rho_+)+\nu_+^2(\rho_1)(\rho_1-\rho_+)\right) \label{eq:gg3}
 \end{align}
and thus plugging in the expressions \eqref{eq:num}, \eqref{eq:nup} for $\nu_-$ and $\nu_+$ we get
\begin{align}
 f(\rho_1) &= \frac{1}{2R^3}\left[\left(A - \sqrt{B}\sqrt{\frac{\rho_1-\rho_+}{\rho_1-\rho_-}}\right)^3(\rho_--\rho_1) + \left(A - \sqrt{B}\sqrt{\frac{\rho_1-\rho_-}{\rho_1-\rho_+}}\right)^3(\rho_1-\rho_+)\right] \nonumber\\
 &+ \frac{1}{2R}\left[\left(A - \sqrt{B}\sqrt{\frac{\rho_1-\rho_+}{\rho_1-\rho_-}}\right)(Q(\rho_-)-Q(\rho_1))\right. \nonumber\\
 &+ \qquad  \left.\left(A - \sqrt{B}\sqrt{\frac{\rho_1-\rho_-}{\rho_1-\rho_+}}\right)(Q(\rho_1)-Q(\rho_+))\right]. \label{eq:f 1}
\end{align}
By an easy calculation we find out that some terms are in fact constant. Denoting
\begin{equation}\label{eq:gg4}
 C_0 := \frac{A^3-3AB}{2R^2} + \frac{A(Q(\rho_-)-Q(\rho_+))}{2R}
\end{equation}
we have
\begin{align}\label{eq:gg5}
 f(\rho_1) &= C_0 + \frac{B^{\frac{3}{2}}}{2R^2}\frac{2\rho_1-\rho_--\rho_+}{\sqrt{(\rho_1-\rho_-)(\rho_1-\rho_+)}} \nonumber\\
 &+ \frac{\sqrt{B}}{2R}\frac{RQ(\rho_1) - \rho_1(Q(\rho_-)-Q(\rho_+)) + \rho_+Q(\rho_-)-\rho_-Q(\rho_+)}{\sqrt{(\rho_1-\rho_-)(\rho_1-\rho_+)}}
\end{align}
and therefore we study further the function 
\begin{equation}\label{eq:gg6}
 g(\rho_1) = \frac{\frac{B}{R^2}(2\rho_1-\rho_--\rho_+) + Q(\rho_1) - \rho_1\frac{Q(\rho_-)-Q(\rho_+)}{R} + \frac{\rho_+Q(\rho_-)-\rho_-Q(\rho_+)}{R}}{\sqrt{(\rho_1-\rho_-)(\rho_1-\rho_+)}}.
\end{equation}
Straightforward calculation yields
\begin{align}
g'(\rho_1) &= \frac{1}{((\rho_1-\rho_-)(\rho_1-\rho_+))^\frac{3}{2}} \nonumber\\ 
&\cdot\left[\left(Q'(\rho_1) + \frac{2B}{R^2} - \frac{Q(\rho_-)-Q(\rho_+)}{R}\right)(\rho_1-\rho_-)(\rho_1-\rho_+)\right. \nonumber\\ 
&\quad-\left. \frac{2\rho_1-\rho_--\rho_+}{2}\left(\frac{B}{R^2}(2\rho_1-\rho_--\rho_+) \right.\right. \nonumber\\
&\qquad +\left.\left. Q(\rho_1) - \rho_1\frac{Q(\rho_-)-Q(\rho_+)}{R} + \frac{\rho_+Q(\rho_-)-\rho_-Q(\rho_+)}{R}\right)\right].\label{eq:gg7}
\end{align}

\begin{proposition}\label{p:g}
 For any $1 \leq \gamma < 3$ and any couple of densities $\rho_- > \rho_+$ there exists a unique local minimum $\overline{\rho} > \rho_-$ of the function $g(\rho_1)$. For fixed $\gamma, \rho_-,\rho_+$ the value of $\overline{\rho}$ grows asymptotically as $B^{\frac{1}{\gamma+1}}$.
\end{proposition}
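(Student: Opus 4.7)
\textbf{Proof plan for Proposition \ref{p:g}.} The strategy is to reduce the critical-point equation $g'(\rho_1)=0$ to the scalar equation $k(\rho_1)=B/2$ for an explicit function $k$ that is strictly increasing on $[\rho_-,\infty)$, vanishes at $\rho_-$, and tends to $+\infty$; this yields simultaneously existence, uniqueness, and the asymptotic growth of $\overline{\rho}$. Concretely, I set $D(\rho_1):=(\rho_1-\rho_-)(\rho_1-\rho_+)$ and $\phi(\rho_1):=Q(\rho_1)-L(\rho_1)$, where $L$ is the affine interpolant of $Q$ through $\rho_\pm$. A short computation verifies that the numerator of $g$ in \eqref{eq:gg6} can be written as $G(\rho_1)=\phi(\rho_1)+\tfrac{B}{R^2}D'(\rho_1)$ with $D'(\rho_1)=2\rho_1-\rho_--\rho_+$. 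Since $Q''(\rho)=\gamma(3-\gamma)\rho^{\gamma-2}>0$ for $1<\gamma<3$ and $Q''(\rho)=2/\rho$ for $\gamma=1$, the function $\phi$ is strictly convex and satisfies $\phi(\rho_\pm)=0$, so $\phi,\phi'>0$ on $(\rho_-,\infty)$. Plugging this decomposition into the bracket $h(\rho_1):=G'(\rho_1)D(\rho_1)-\tfrac{1}{2}G(\rho_1)D'(\rho_1)$ in \eqref{eq:gg7} and using the elementary identity $4D-(D')^2=-R^2$ (valid for any monic quadratic with roots $\rho_\pm$), all $B$-containing terms collapse to the constant $-B/2$, leaving
\begin{equation*}
h(\rho_1)=k(\rho_1)-\tfrac{B}{2},\qquad k(\rho_1):=\phi'(\rho_1)D(\rho_1)-\tfrac{1}{2}\phi(\rho_1)D'(\rho_1).
\end{equation*}

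The boundary behavior is immediate: $k(\rho_-)=0$, while expanding to leading order at infinity yields $k(\rho_1)\sim(3-\gamma)\rho_1^{\gamma+1}$ for $\gamma>1$ (with analogous $\rho_1^2\log\rho_1$-type leading behavior for $\gamma=1$); the hypothesis $\gamma<3$ is essential here to secure $k(\rho_1)\to+\infty$. The central step is then to show $k$ is strictly increasing on $[\rho_-,\infty)$. Differentiating twice gives
\begin{equation*}
k'(\rho_1)=\phi''(\rho_1)D(\rho_1)+\tfrac{1}{2}\phi'(\rho_1)D'(\rho_1)-\phi(\rho_1),\qquad k''(\rho_1)=\phi'''(\rho_1)D(\rho_1)+\tfrac{3}{2}\phi''(\rho_1)D'(\rho_1).
\end{equation*}
For $\gamma\in[2,3)$ one has $\phi'''\geq 0$ and $\phi''>0$, so $k''>0$ on $(\rho_-,\infty)$ is immediate. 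The delicate case is $1\leq\gamma<2$, where $\phi'''<0$: after factoring out the positive prefactor $\gamma(3-\gamma)\rho_1^{\gamma-3}$ (or its $\gamma=1$ analogue), $k''$ is proportional to the quadratic $q(\rho_1):=(\gamma-2)D(\rho_1)+\tfrac{3}{2}\rho_1 D'(\rho_1)$. Direct expansion shows $q(\rho_-)=\tfrac{3}{2}\rho_- R>0$, with positive leading coefficient $\gamma+1$ and negative constant term $(\gamma-2)\rho_-\rho_+$; by Vieta, $q$ has one negative and one positive root, and the positivity of $q(\rho_-)$ forces the positive root to lie below $\rho_-$. Hence $q>0$, so $k''>0$, on $(\rho_-,\infty)$. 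Combined with $k'(\rho_-)=\tfrac{R}{2}\phi'(\rho_-)>0$, which follows from strict convexity of $\phi$ at its zero $\rho_-$, this yields $k'>0$ throughout $[\rho_-,\infty)$.

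Thus $k$ is a strictly increasing bijection from $[\rho_-,\infty)$ onto $[0,\infty)$, so $k(\rho_1)=B/2$ has a unique solution $\overline{\rho}>\rho_-$, which is the unique critical point of $g$. Since $g(\rho_-^+)=+\infty$ (the numerator tends to $B/R>0$ while $\sqrt{D}\to 0$) and $g(+\infty)=+\infty$, this critical point is the unique local minimum. Inverting the leading asymptotic $(3-\gamma)\overline{\rho}^{\gamma+1}\sim B/2$ yields $\overline{\rho}\sim B^{1/(\gamma+1)}$ as $B\to\infty$. The main obstacle I anticipate is the quadratic sign analysis in the range $1\leq\gamma<2$, but it reduces to a finite explicit check on the roots of $q$; all remaining ingredients follow from direct computation and the convexity of $\phi$.
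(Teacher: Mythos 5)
Your proof is correct and in substance identical to the paper's: both reduce $g'(\rho_1)=0$ to the scalar equation $z(\rho_1)=B$ (your $k$ is $z/2$ and your equation is $k=B/2$) and establish existence, uniqueness, and asymptotics by showing this function vanishes at $\rho_-$, has positive derivative there, and has positive second derivative on $(\rho_-,\infty)$, hence is strictly increasing. The only differences are cosmetic packaging: you factor out the convexity-gap $\phi=Q-L$ and use the identity $4D-(D')^2=-R^2$ to make the collapse of the $B$-terms to a constant explicit, and for the sign of the second derivative you split $\gamma<2$ from $\gamma\geq 2$, whereas the paper treats all $\gamma\in[1,3)$ uniformly by checking that the vertex of the quadratic factor $z_0$ lies to the left of $\rho_-$.
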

Assuming Proposition \ref{p:g} holds we now conclude the proof of Lemma \ref{l:f increasing} easily comparing the asymptotic growth of $\rho_{max}$ stated in Lemma \ref{l:ep1 decreasing} and the asymptotic growth of $\overline{\rho}$ stated in Proposition \ref{p:g}. Since $\overline{\rho}$ grows slower than $\rho_{max}$, the proof of Lemma \ref{l:f increasing} and thus Theorem \ref{t:main} is finished by taking $\abs{u}$ large enough so that $\overline{\rho} < \rho_{max}$.
\end{proof}

\begin{proof}[Proposition \ref{p:g}]
 From \eqref{eq:gg7} we derive the equation satisfied by any critical point of $g(\rho_1)$.
 \begin{align}
 &2\left(Q'(\rho_1)-\frac{Q(\rho_-)-Q(\rho_+)}{R}\right)(\rho_1-\rho_-)(\rho_1-\rho_+) \nonumber\\ 
 &\quad - (2\rho_1-\rho_--\rho_+)\left(Q(\rho_1) - \rho_1\frac{Q(\rho_-)-Q(\rho_+)}{R} + \frac{\rho_+Q(\rho_-)-\rho_-Q(\rho_+)}{R}\right) = B. \label{eq:ggg0}
 \end{align}
Denote $z(\rho_1)$ the function on the left hand side of \eqref{eq:ggg0} and observe that $z(\rho_-) = 0$. Moreover 
\begin{align}
 z'(\rho_1) &= (2\rho_1-\rho_--\rho_+)\left(Q'(\rho_1)-\frac{Q(\rho_-)-Q(\rho_+)}{R}\right) + 2(\rho_1-\rho_-)(\rho_1-\rho_+)Q''(\rho_1) \\ \nonumber
 &-2\left(Q(\rho_1) - \rho_1\frac{Q(\rho_-)-Q(\rho_+)}{R} + \frac{\rho_+Q(\rho_-)-\rho_-Q(\rho_+)}{R}\right). \label{eq:gggn1}
\end{align}
We claim that $z'(\rho_-) > 0$. Indeed easy calculation yields in the case $\gamma > 1$
\begin{equation}\label{eq:ggg3}
 z'(\rho_-) = \frac{3-\gamma}{\gamma-1}\left((\gamma-1)\rho_-^{\gamma}-\gamma\rho_-^{\gamma-1}\rho_++\rho_+^{\gamma}\right)
\end{equation}
and denoting $w(\rho_-)=\rho_-^{\gamma}$ we have for some $\xi \in (\rho_+,\rho_-)$
\begin{equation}\label{eq:ggg4}
 \gamma\rho_-^{\gamma-1} = w'(\rho_-) > w'(\xi) = \frac{\rho_-^{\gamma}-\rho_+^\gamma}{\rho_--\rho_+}
\end{equation}
which together with \eqref{eq:ggg3} proves that $z'(\rho_-) > 0$. 

In the case $\gamma = 1$ we plug in the expression $Q(\rho) = 2\rho\log\rho - \rho$ and we can simplify the resulting expression up to 
\begin{equation}\label{eq:ggg3n}
 z'(\rho_-) = \rho_- - \rho_+ - \rho_+\log\frac{\rho_-}{\rho_+}.
\end{equation}
Dividing this by $\rho_+$ and denoting $r=\frac{\rho_-}{\rho_+}$, the positivity of $z'(\rho_-)$ follows from the fact that function $r-1-\log r$ is positive for $r > 1$.

Next we have
\begin{align}\label{eq:ggg5}
 z''(\rho_1) &= 3(2\rho_1-\rho_--\rho_+)Q''(\rho_1) + 2(\rho_1-\rho_-)(\rho_1-\rho_+)Q'''(\rho_1) \\ \nonumber 
 &= (3-\gamma)\gamma\rho_1^{\gamma-3}\left((2\gamma+2)\rho_1^2-(2\gamma-1)\rho_1(\rho_-+\rho_+)+(2\gamma-4)\rho_-\rho_+\right) \\ \nonumber
 &= (3-\gamma)\gamma\rho_1^{\gamma-3} z_0(\rho_1). \label{eq:ggg5}
\end{align}
for $\gamma = 1$ as well as for $\gamma > 1$. Again we easily compute $z_0(\rho_-) = 3\rho_-(\rho_--\rho_+) > 0$. The local minimum of the quadratic function $z_0(\rho_1)$ is in point
\begin{equation}\label{eq:ggg6}
 \rho_0 = \frac{(2\gamma-1)(\rho_-+\rho_+)}{4(\gamma+1)}
\end{equation}
and it is not difficult to check that $\rho_0 < \rho_-$. We now conclude that on the interval $(\rho_-,+\infty)$ the function $z_0(\rho_1)$ is positive and therefore $z''(\rho_1)$ is there also positive. This means that $z'(\rho_1)$ is increasing on $(\rho_-,+\infty)$ and since we already know that $z'(\rho_-) > 0$ we have in particular that $z'$ is positive on $(\rho_-,+\infty)$ and therefore $z(\rho_1)$ is strictly increasing on $(\rho_-,+\infty)$. The equation \eqref{eq:ggg0} has therefore for $1 \leq \gamma < 3$ a unique solution $\overline{\rho}$ for any positive $B$. Since asymptotically $z(\rho_1) \sim \rho_1^{\gamma+1}$ we also conclude that $\overline{\rho} \sim B^{\frac{1}{\gamma+1}}$.
\end{proof}

\section{Concluding remarks}\label{s:per}

The problem of infinite domain and therefore infinite energy might be solved also restricting to a periodic domain. Note that the self-similar solution does not depend on $x_1$ and therefore it can be seen as periodic in $x_1$ with any period we like. The nonstandard solutions on the other hand depend on $x_1$. However the subsolution which provides the existence of infinitely many solutions does not. Therefore from this subsolution we can in a similar way construct infinitely many periodic (in $x_1$) solutions with a given period.

The situation in the direction $x_2$ is a little more complicated. We will use the following easy Proposition to avoid possible problems with vacuum. 
\begin{proposition}\label{p:vacuum}
Let $\rho_\pm > 0$ and $v_{\pm 2}$ be given. Then there exists finite $N > 0$ and a family of states $(\rho^k,\rho^kv^k_2)$, $k=1,...,N+1$ such that 
\begin{itemize}
 \item $(\rho^1,\rho^1v^1_2) = (\rho_+,\rho_+v_{+2})$
 \item $(\rho^{N+1},\rho^{N+1}v^{N+1}_2) = (\rho_-,\rho_-v_{-2})$
 \item For all $k=1,...,N$ the solution of the Riemann problem \eqref{eq:reduced system} with initial data 
 \begin{equation}\label{eq:Rdata m3}
(\rho^0 (x), m^0 (x)) := \left\{
\begin{array}{ll}
(\rho^{k},\rho^kv^{k}_2) \quad & \mbox{if $x_2<0$}\\ \\
(\rho^{k+1},\rho^{k+1}v^{k+1}_2)  & \mbox{if $x_2>0$.} 
\end{array}\right. 
\end{equation}
does not contain an intermediate vacuum region.
\end{itemize}
\end{proposition}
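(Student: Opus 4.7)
The plan is to construct the chain explicitly by linear interpolation between the prescribed endpoints and then invoke Lemma \ref{l:Riemann selfsimilar solutions}, whose only case producing an intermediate vacuum region is case~1. For an integer $N$ to be chosen later, set
\begin{equation*}
\rho^k := \rho_+ + \tfrac{k-1}{N}(\rho_- - \rho_+), \qquad v^k_2 := v_{+2} + \tfrac{k-1}{N}(v_{-2} - v_{+2}), \qquad k = 1, \ldots, N+1.
\end{equation*}
The endpoint conditions on $(\rho^1, \rho^1 v^1_2)$ and $(\rho^{N+1}, \rho^{N+1} v^{N+1}_2)$ hold by construction, and convex combinations of positive numbers yield $\rho^k \geq \rho_0 := \min(\rho_-, \rho_+) > 0$ for every $k$.

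Next, by Lemma \ref{l:Riemann selfsimilar solutions}, the Riemann problem \eqref{eq:Rdata m3} for a consecutive pair contains an intermediate vacuum region only in case~1, i.e.\ only if
\begin{equation*}
v^{k+1}_2 - v^k_2 \geq \int_0^{\rho^k} \frac{\sqrt{p'(\tau)}}{\tau}\,\d\tau + \int_0^{\rho^{k+1}} \frac{\sqrt{p'(\tau)}}{\tau}\,\d\tau ;
\end{equation*}
in cases 2--5 the intermediate density is strictly positive. If $v_{-2} \geq v_{+2}$, the left-hand side above is non-positive for every $k$, while the right-hand side is strictly positive (or $+\infty$), so $N=1$ suffices. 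Similarly, if $\gamma = 1$ then $p'(\tau) \equiv 1$ and the integrals diverge at $0$, so the inequality can never hold and again $N=1$ suffices.

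It remains to treat the case $v_{-2} < v_{+2}$ with $\gamma > 1$. Here a direct computation gives $\int_0^\rho \sqrt{p'(\tau)}/\tau\,\d\tau = \frac{2\sqrt{\gamma}}{\gamma - 1}\rho^{(\gamma-1)/2}$, so the right-hand side of the above inequality is bounded below by the positive constant $\frac{4\sqrt{\gamma}}{\gamma-1}\rho_0^{(\gamma-1)/2}$, independently of $k$ and $N$. The left-hand side equals $(v_{+2} - v_{-2})/N$ and tends to zero as $N \to \infty$; choosing $N$ large enough yields the strict reverse inequality for every $k$, ruling out case~1 and thus excluding any intermediate vacuum. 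The argument is essentially routine verification against Lemma \ref{l:Riemann selfsimilar solutions}; the only subtlety is securing the uniform lower bound $\rho_0 > 0$ along the interpolating chain, which is immediate from linear interpolation between positive endpoints.
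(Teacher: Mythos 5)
Your argument is nearly correct, but the case analysis is reversed because of a sign slip. With your linear interpolation the increment entering case~1 of Lemma~\ref{l:Riemann selfsimilar solutions} is
\begin{equation*}
v^{k+1}_2 - v^k_2 = \frac{v_{-2} - v_{+2}}{N},
\end{equation*}
not $(v_{+2}-v_{-2})/N$ as you write. Hence the trivial case is $v_{-2}\leq v_{+2}$ (the increment is non-positive, so the vacuum condition can never fire and $N=1$ suffices), and the case that actually requires $N$ large is $v_{-2}>v_{+2}$ — which, incidentally, is precisely the regime arising from the two-shock data of \eqref{eq:2shocks condition} and is therefore the one that matters in Section~\ref{s:per}. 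After swapping the two branches your argument goes through unchanged: for $\gamma>1$ you have the uniform lower bound
\begin{equation*}
\int_0^{\rho^k}\frac{\sqrt{p'(\tau)}}{\tau}\,\d\tau + \int_0^{\rho^{k+1}}\frac{\sqrt{p'(\tau)}}{\tau}\,\d\tau \;\geq\; \frac{4\sqrt{\gamma}}{\gamma-1}\,\rho_0^{\frac{\gamma-1}{2}} \;>\;0
\end{equation*}
because $\rho^k\geq\rho_0:=\min(\rho_-,\rho_+)>0$, while for $\gamma=1$ the integrals diverge; so choosing $N$ large enough drives $(v_{-2}-v_{+2})/N$ below the bound for every $k$ simultaneously.

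Once the signs are fixed, yours is a legitimate alternative to the paper's proof. The paper freezes the intermediate densities at $\rho_+$ and increments the velocity by exactly $\int_0^{\rho_+}\sqrt{p'(\tau)}/\tau\,\d\tau$ at each step, which yields an explicit $N$ through an integer-part formula and avoids having to bound integrals over a varying chain of densities (all but the last increment are literally identical). Your linear interpolation in both $\rho$ and $v_2$ is a bit cleaner to state but pays for it with the extra — though easy — uniform lower bound on the integrals.
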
 
\begin{proof}
 According to Lemma \ref{l:Riemann selfsimilar solutions} we immediately conclude that in the case
 \begin{equation}\label{eq:hh1}
  v_{-2} - v_{+2} < \int_0^{\rho_+} \frac{\sqrt{p'(\tau)}}{\tau} \d\tau + \int_0^{\rho_-} \frac{\sqrt{p'(\tau)}}{\tau} \d\tau
 \end{equation}
 we can set $N = 1$. If \eqref{eq:hh1} does not hold we set\footnote{Here $[x]$ denotes the integer part of the real number $x$.}
 \begin{equation}\label{eq:hh2}
  N := \left[\frac{v_{-2}-v_{+2}}{\int_0^{\rho_+} \frac{\sqrt{p'(\tau)}}{\tau} \d\tau}\right] + 1
 \end{equation}
 and for $k=2,...,N$ define
 \begin{align}
  \rho^k &:= \rho_+ \label{eq:hh3}\\ 
  v^k_2 &:= v_{+2} + (k-1)\left(\int_0^{\rho_+} \frac{\sqrt{p'(\tau)}}{\tau} \d\tau\right). \label{eq:hh30}
 \end{align}
 It is now easy to check that none of the solutions to Riemann problems with initial data $(\rho^k,\rho^kv^k_2)$ on the left and $(\rho^{k+1},\rho^{k+1}v^{k+1}_2)$ on the right contains an intermediate vacuum region.
\end{proof} 

For given initial data $\rho_\pm > 0, v_{\pm 2}$ let $N > 0$ and $(\rho^k,\rho^kv^k_2)$ be given by Proposition \ref{p:vacuum}. We construct the periodic problem as follows. Consider the initial data:
\begin{align}
 \rho^0 &= \rho^{N+1} \qquad \text{ for } x_2 \in \left(0,\frac{1}{2N+2}\right) \cup \left(\frac{2N+1}{2N+2},1\right)\label{eq:IC 1} \\
 \rho^0 &= \rho^k \qquad \text{ for } x_2 \in \left(\frac{2k-1}{2N+2},\frac{2k+1}{2N+2}\right) \qquad \text{ for } k=1,...,N
\end{align}
and similarly
\begin{align}
 v^0 &= v^{N+1} = (0,v^{N+1}_2) \qquad \text{ for } x_2 \in \left(0,\frac{1}{2N+2}\right) \cup \left(\frac{2N+1}{2N+2},1\right) \\
 v^0 &= v^k = (0,v^k_2) \qquad \text{ for } x_2 \in \left(\frac{2k-1}{2N+2},\frac{2k+1}{2N+2}\right) \qquad \text{ for } k=1,...,N\label{eq:IC 4}
\end{align} 
and we will consider solutions which are spatially periodic with period $(0,1)^2$ and local in time. By working in this periodic setting we in particular ensure that the dissipation rate $D[\rho,v](t)$ is 
nonpositive for any admissible solution $(\rho,v)$.

We construct locally in time two types of solutions. First consider a classical self-similar solution $(\rho_c,v_c)$ to a problem with initial data \eqref{eq:IC 1}--\eqref{eq:IC 4} keeping the assumption that the initial data satisfy
\begin{equation}
 u = v_{+2}-v_{-2} < -\sqrt{\frac{(\rho_--\rho_+)(p(\rho_-)-p(\rho_+))}{\rho_-\rho_+}}.
\end{equation}
We see that such solution in particular consists of an admissible $1-$shock and an admissible $3-$shock emanating from line $x_2 = \frac{1}{2N+2}$. 

Then consider (infinitely many) admissible nonstandard solutions $(\rho_n,v_n)$ constructed from an admissible subsolution consisting of a piecewice constant values of $\rho$ and $\abs{v}^2$ in regions $P_-$, $P_1$ and $P_+$ emanating from line $x_2 = \frac{1}{2N+2}$ and a self-similar part (consisting of admissible shocks and rarefaction waves) emanating from lines $x_2 = \frac{2k+1}{2N+2}$, $k=1,...,N$. The self-similar part of such solutions $(\rho_n,v_n)$ is the same as the appropriate part of the self-similar solution $(\rho_c,v_c)$.

By the analysis in the previous section we conclude, that there exist  initial data \eqref{eq:IC 1}--\eqref{eq:IC 4} for which there exist (infinitely many) nonstandard periodic admissible solutions $(\rho_n,v_n)$ which dissipate locally in time more energy than the self-similar solution $(\rho_c,v_c)$, i.e.
\begin{equation}
 D[\rho_n,v_n](t) < D[\rho_c,v_c](t)
\end{equation}
 for all $0 < t < T^*$ with $T^* > 0$ given by the initial data is the time of first interaction between waves and shocks emanating from different lines. 
Here $D[\rho,v](t)$ is defined as in \eqref{eq:dissipation rate0} keeping in mind that the energy $E[\rho,v](t)$ is defined as an integral over $(0,1)^2$.

\subsection*{Acknowledgement} The authors are very thankful to Camillo De Lellis for suggesting them the problem and for many enlightening discussions.

\end{document}